\newtheorem{theorem}{\sc Theorem}[section]
\newtheorem{lemma}[theorem]{\sc Lemma}
\newtheorem{proposition}[theorem]{\sc Proposition}
\newcommand{\T}{T}
\newcommand{\pr }{\mathrm{Pr} }
\begin{document}
\title[On groups with BFC-covered word values]{On groups with BFC-covered word values}
\thanks{The first and second authors are members of GNSAGA (INDAM), 
and the third author was  supported by  FAPDF and CNPq.}

\author{Eloisa Detomi}
\address{Dipartimento di Matematica \lq\lq Tullio Levi-Civita\rq\rq, Universit\`a di Padova, Via Trieste 63, 35121 Padova, Italy} 
\email{eloisa.detomi@unipd.it}
\author{Marta Morigi}
\address{Dipartimento di Matematica, Universit\`a di Bologna\\
Piazza di Porta San Donato 5 \\ 40126 Bologna \\ Italy}
\email{marta.morigi@unibo.it}
\author{Pavel Shumyatsky}
\address{Department of Mathematics, University of Brasilia\\
Brasilia-DF \\ 70910-900 Brazil}
\email{pavel@unb.br}

\subjclass[2020]{20E45,20F24,20F19} 
\keywords{conjugacy classes, word values, nilpotent groups}

\begin{abstract} 
For a group $G$ and a positive integer $n$ write $B_n(G) = \{x \in G : |x^G | \le n\}$. If $s\geq1$ and $w$ is a group word, say that $G$ satisfies the 
$(n,s)$-covering condition with respect to the word $w$ if there exists a subset $S\subseteq G$ such that $|S| \le s$ and all $w$-values of $G$ are contained in $B_n(G)S$. In a natural way, this condition emerged in the study of probabilistically nilpotent groups of class two. In this paper we obtain the following results.
\bigskip

\noindent{\it Let $w$ be a multilinear commutator word on $k$ variables and let $G$ be a group satisfying the $(n,s)$-covering condition with respect to the word $w$. Then $G$ has a soluble subgroup $\T$ such that $[G : \T]$ and the derived length of $\T$ are both $(k,n,s)$-bounded.} (Theorem \ref{soluble-main}.)
\bigskip

\noindent{\it Let $k\geq1$ and $G$ be a group satisfying the $(n,s)$-covering condition with respect to the word $\gamma_k$. Then (1)
$\gamma_{2k-1}(G)$ has a subgroup $\T$ such that $[\gamma_{2k-1}(G) : \T]$ and $|\T'|$ are both $(k,n,s)$-bounded; and (2) $G$ has a nilpotent subgroup $U$ such that $[G : U]$ and the nilpotency class of $U$ are both $(k,n,s)$-bounded.} (Theorem \ref{cov-nilp}.)

 \end{abstract}
\maketitle
\section{Introduction}

An interesting covering condition has emerged in the study of probabilistically nilpotent groups of class two \cite{ebeshu}. If $G$ is a finite group,  we set 
$$d_k(G) = |\{(x_1,...,x_{k+1}) \in G^{k+1} : [x_1,...,x_{k+1}] = 1\}|/|G|^{k+1}, $$
 and we say that $G$ is probabilistically nilpotent of class $k$ if $d_k(G)$ is bounded away from zero. 

The well-known theorem of P. M. Neumann \cite{pmneumann} says that if $G$ is probabilistically nilpotent of class $1,$ i.e. 
 $d_1(G)\geq\epsilon>0$, then $G$ has a subgroup $H$ such that $[G:H]$ and $|H'|$ are both $\epsilon$-bounded, that is, $G$ is bounded-by-abelian-by-bounded. Throughout, we say a group $G$ is $X$-by-$Y$ to mean that there is a normal $X$-subgroup $N$ in $G$ such that $G/N$ is a $Y$-group. We say ``$(a, b, c, \dots )$-bounded" to mean ``bounded by a function of $a, b, c, \dots$ only. Where the parameters $a, b, c, \dots$ are clear from the context we often just say ``bounded".  The main result of \cite{ebeshu} states that if $d_2(G)\geq\epsilon>0$, then $G$ has a subgroup $H$ such that $[G:H]$ and $|\gamma_4(H)|$ are both $\epsilon$-bounded. The structure of finite groups $G$ with $d_k(G)\geq\epsilon>0$, where $k\geq3$ remains a mystery.

Much of the paper \cite{ebeshu} consists of a study of groups satisfying a certain commutator covering condition. For a group $G$ and a positive integer $n$ write $B_n(G) = \{x \in G : |x^G | \le n\}$. It was shown in \cite{ebeshu} that if $d_2(G)\geq\epsilon>0$, then there are $\epsilon$-bounded positive integers $n$ and $s$ with the property that $G$ has a bounded index subgroup $K$ and a subset $S\subseteq K$ such that $|S|\leq s$ and $[x,y]\in B_n(K)S$ for any $x,y\in K$. Note that the condition makes sense for infinite as well as finite groups, and is satisfied by groups where all commutators have conjugacy classes of bounded size (the case S = \{1\}), which were studied in \cite{dieshu}. A theorem obtained in \cite{ebeshu} says that any group $K$ satisfying the above condition has a subgroup $T$ such that $[K:T]$ and $|\gamma_4(T)|$ are both finite and $(n,s)$-bounded. 

Given a group word $w=w(x_1,\dots,x_k)$, write $G_w$ for the set of all values $w(g_1,\ldots,g_k)$, where $g_1,\ldots,g_k$ are elements of $G$. We say that the group $G$ satisfies the $(n,s)$-covering condition with respect to the word $w$ if there exists a subset $S\subseteq G$ such that $|S| \le s$ and
 \begin{equation}\label{cover} G_{w}\subseteq B_n(G)S.\end{equation}

In the present paper we study groups $G$ admitting a word $w$ for which the set of $w$-values satisfies the above covering condition.  In particular, we work with multilinear commutator words, sometimes also called in the literature outer commutator words.   Examples of multilinear commutators include the lower central words $\gamma_k(x_1,\dots,x_k)=[x_1,\dots,x_k]$. Of course, $\gamma_k(G)= \langle G_{\gamma_k} \rangle$ 
  is the $k$-th  term of the lower central series of $G$.

We obtain the following results.

\begin{theorem}\label{soluble-main} Let $w$ be a multilinear commutator word on $k$ variables and let $G$ be a group satisfying the $(n,s)$-covering condition with respect to the word $w$. Then $G$ has a soluble subgroup $\T$ such that $[G : \T]$ and the derived length of $\T$ are both $(k,n,s)$-bounded.
\end{theorem}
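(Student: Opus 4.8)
The plan is to reduce the theorem to the following assertion and then deduce it by a centralizer argument: \emph{$G$ has a finite normal subgroup $V$ of $(k,n,s)$-bounded order such that $G/V$ is soluble of $(k,n,s)$-bounded derived length.} Indeed, once this is known, set $\T=C_G(V)$; since $|V|$ is bounded, $[G:\T]\le|\mathrm{Aut}(V)|$ is bounded, while $\T\cap V=Z(V)$ is abelian and $\T/(\T\cap V)\cong \T V/V\le G/V$ is soluble of bounded derived length, so $\T$ is soluble of bounded derived length, as required. Throughout I use two standard facts about a multilinear commutator word $w$ of weight $k$: its value set $G_w$ is closed under inversion and under conjugation, and $G^{(d)}\le w(G)$ for some $k$-bounded integer $d$ read off from the bracketing of $w$, so that $G/w(G)$ is soluble of $k$-bounded derived length.

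\emph{Step 1 (reducing the covering set to a single coset, in a strong form).} Write $S=\{s_1,\dots,s_m\}$ with $m\le s$, and for each $g\in G_w$ fix $b_g\in B_n(G)$ with $g=b_gs_{i(g)}$. For $h\in G$ the element $g^h$ again lies in $G_w$, say $g^h=b's_j$; comparing with $g^h=b_g^{h}s_{i(g)}^{h}$ and using $b_g^h\in B_n(G)$ gives $s_{i(g)}^{h}\in B_{n^2}(G)s_j$. Hence the relation ``$s_i\in B_N(G)s_j$'', for a suitable $(n,s)$-bounded $N$, groups the relevant $s_i$ into at most $m$ ``clusters'' which the conjugation action of $G$ permutes; let $K$ be the kernel of the induced homomorphism of $G$ into a symmetric group of degree $\le m$, so $[G:K]$ is $(n,s)$-bounded. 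For $h\in K$ one obtains $s_i^{h}\in B_{N'}(G)s_i$ with $N'$ bounded, and then a direct computation shows that for every $w$-value $g$ of $K$ and every $h\in K$ one has $[g,h]\in B_M(G)$ with $M$ a $(n,s)$-bounded integer. Since $[G:K]$ is bounded it suffices to prove the theorem for $K$; replacing $G$ by $K$ we may therefore assume from now on that $[g,h]\in B_M(G)$ for all $g\in G_w$ and all $h\in G$.

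\emph{Step 2 (the bad finite part and the conclusion).} The word $w'=[w(x_1,\dots,x_k),x_{k+1}]$ is a multilinear commutator word of weight $k+1$, $w'(G)=[w(G),G]$, and by Step 1 every $w'$-value of $G$ lies in $B_M(G)$. Applying the theory of word values of bounded conjugacy-class size (the case $S=\{1\}$, \cite{dieshu}) to $w'$ we get that $V:=[w(G),G]'=w'(G)'$ is finite of $(k,n,s)$-bounded order, and $V$ is normal in $G$. In the normal series $V\le[w(G),G]\le w(G)\le G$ the factor $[w(G),G]/V$ is abelian, the factor $w(G)/[w(G),G]$ is central in $G/[w(G),G]$ hence abelian, and $G/w(G)$ is soluble of $k$-bounded derived length; therefore $G/V$ is soluble of $(k,n,s)$-bounded derived length. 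Combined with the reduction at the start, this establishes the theorem, after undoing the passage to $K$: if $\T\le K$ works for $K$, then $[G:\T]=[G:K]\,[K:\T]$ and the derived length of $\T$ remain bounded.

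The heart of the matter is Step 1: turning the coset-covering hypothesis into a \emph{uniform} bound on the conjugacy-class sizes of the ``one-step-further'' commutators $[g,h]$ with $g\in G_w$. The cluster bookkeeping is elementary, but the bounds compound badly (one is led to an $M$ of order roughly $n^{O(s)}$), and it uses in an essential way that $G_w$ is conjugation-closed; one must also make sure that the $S=\{1\}$ input invoked in Step 2 is available for arbitrary multilinear commutator words, not merely for $\gamma_2$.
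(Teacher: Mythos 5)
Your proposal is correct in substance, but it follows a genuinely different route from the paper's. The paper handles the covering set by induction on $s$: whenever some $w$-value has conjugacy class size strictly between $n$ and $n^{2^k}$, a coset $Bs_i$ can be absorbed into an enlarged $B$ and $s$ drops; otherwise a stability condition holds, under which every value $w(h,g_2,\dots,g_k)$ with $h\in H=\langle B\rangle$ is shown to lie in $B$, the technical Proposition \ref{pavels-lemma} then bounds $|w(H,G,\dots,G)'|$, and conciseness of $w$ in $G/H$ finishes the argument. You instead keep $s$ fixed, analyse the conjugation action on the covering cosets, pass to the bounded-index kernel $K$ of the induced permutation of clusters, and observe that in $K$ every value of the longer multilinear commutator $w'=[w,x_{k+1}]$ has boundedly many conjugates, so that the $S=\{1\}$ theorem applied to $w'$ bounds $|[w(K),K]'|$. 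Your route bypasses the machinery of Section 2 and the conciseness input entirely, and it even yields a slightly better derived length (about $k+2$ rather than $2k+1$); the paper's route avoids the coset bookkeeping and builds tools that are reused for Theorem \ref{cov-nilp}. Two points you flagged should be settled as follows. First, the $S=\{1\}$ input is indeed available for arbitrary multilinear commutator words: it is Theorem 1.2 of \cite{dms} (the reference \cite{dieshu} covers only $\gamma_2$), and it is exactly what the paper invokes in its own $s=1$ base case. Second, the cluster bookkeeping genuinely needs a stabilization pigeonhole: the relation $s_is_j^{-1}\in B_N(G)$ is not transitive for fixed $N$, and both the well-definedness of the induced map on clusters (independence of the chosen $w$-value in $B_n(G)s_i$ and of the cluster representative) and the homomorphism property degrade $N$ by a bounded factor, so you must first fix $N$ at a stage in a bounded chain $N_0<N_1<\cdots$ where the partition into clusters stops refining; the resulting $M$ is then $(n,s)$-bounded (in fact considerably larger than $n^{O(s)}$), which is all the theorem requires.
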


\begin{theorem}\label{cov-nilp} Let $k\geq1$ and $G$ be a group satisfying the $(n,s)$-covering condition with respect to the word $\gamma_k$. Then
\begin{enumerate}
\item $\gamma_{2k-1}(G)$ has a subgroup $\T$ such that $[\gamma_{2k-1}(G) : \T]$ and $|\T'|$ are both $(k,n,s)$-bounded.
\item $G$ has a nilpotent subgroup $U$ such that $[G : U]$ and the nilpotency class of $U$ are both $(k,n,s)$-bounded.
\end{enumerate}
\end{theorem}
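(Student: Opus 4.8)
The plan is to reduce to finitely generated $G$, invoke Theorem~\ref{soluble-main}, prove (1), and then deduce (2). First I would note that the covering condition passes to $H=\langle X\cup S\rangle$ for any finite $X\subseteq G$: since $S\subseteq H$, each $h=b\sigma\in H_{\gamma_k}\subseteq B_n(G)S$ has $b=h\sigma^{-1}\in H$ with $|b^{H}|\le|b^{G}|\le n$, so $H_{\gamma_k}\subseteq B_n(H)S$; as the conclusions are of ``bounded index plus a bounded invariant'' type, proving them with uniform bounds for all finitely generated subgroups suffices, by a routine direct-limit argument. So assume $G$ finitely generated, and fix, by Theorem~\ref{soluble-main}, a normal soluble subgroup $N$ with $[G:N]$ and $d=\mathrm{dl}(N)$ both $(k,n,s)$-bounded.

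The elementary engine is that bounded conjugacy classes survive left-normed commutation: $C_G([x,g])\supseteq C_G(x)\cap C_G(x)^{g}$, so $x\in B_m(G)$ forces $[x,g]\in B_{m^{2}}(G)$ and, iterating, $[x,g_1,\dots,g_{k-1}]\in B_{m^{2^{k-1}}}(G)$. Since $\gamma_{2k-1}(G)=[\gamma_k(G),\underbrace{G,\dots,G}_{k-1}]$ is generated by the conjugates of the left-normed commutators $[u,g_1,\dots,g_{k-1}]$ with $u\in G_{\gamma_k}$, in the case $S=\{1\}$ every $\gamma_{2k-1}$-value has $(k,n)$-bounded conjugacy class, and part (1) then follows from the known treatment of that case (\cite{dieshu}; compare the $k=2$ instance in \cite{ebeshu}). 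So the whole problem is to get rid of $S$.

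The removal of $S$ is where I expect the main obstacle. A normalisation first: any two elements of $G_{\gamma_k}\cap B_n(G)\sigma_i$ differ on the right by an element of $B_{n^{2}}(G)$, so choosing a representative $x_i\in G_{\gamma_k}$ in each nonempty $G_{\gamma_k}\cap B_n(G)\sigma_i$ rewrites the hypothesis as $G_{\gamma_k}\subseteq\bigcup_i B_M(G)x_i$ with $x_i\in G_{\gamma_k}$ and $M$ $(n,s)$-bounded; absorbing $i$ into $j$ whenever $x_ix_j^{-1}\in B_M(G)$, one may assume that no $x_i$ has $(n,s)$-bounded conjugacy class. Now $G_{\gamma_k}$ is conjugation-invariant, so each single class $x_i^{G}$ is covered by the finitely many translates $B_M(G)x_j$; a B.~H.~Neumann-type counting argument — the ``return set'' $\{g:x_i^{g}\in B_{N}(G)x_i\}$ contains a subgroup of index bounded by a power of $N$, so cosets corresponding to translates of unbounded-index type are redundant — should, after passing to an $(n,s)$-bounded-index subgroup $H$ and using the index-transfer fact that $[\gamma_j(G):\gamma_j(H)]$ is bounded when $[G:H]$ is, reduce us to $H_{\gamma_k}\subseteq B_{n_1}(H)$; the soluble structure from Theorem~\ref{soluble-main} enters here to control the FC-part of $G$. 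Feeding $H$ into the $S=\{1\}$ result and transferring along $[\gamma_{2k-1}(G):\gamma_{2k-1}(H)]$ gives part (1). The delicate points are how a single conjugacy class distributes among the translates, and how the covering condition behaves under passage to bounded-index subgroups.

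For part (2) I would argue by induction on $d=\mathrm{dl}(N)$, using that $G/M$ with $M=N^{(d-1)}$ abelian normal in $G$ inherits the covering condition and has a normal soluble subgroup of derived length $<d$: by induction $G/M$ is virtually nilpotent of bounded class, so pulling back there is a bounded-index $U\le G$ with $\gamma_c(U)\le M$ for bounded $c$. It then remains to find a bounded-index subgroup of $U$ that is nilpotent of bounded class. Part (1) gives $\gamma_{2k-1}(G)$, hence $\gamma_{2k-1}(U)$, virtually BFC with bounded parameters; combined with a further use of the covering condition on $G$ — the $\gamma_k$-values lying in the abelian normal subgroup $M$ constrain the conjugation action on $M$ — this should force $U$ to act nilpotently on $M$ modulo a bounded-index subgroup and a bounded finite piece, i.e. rule out Baumslag--Solitar-type elements (an eigenvalue that is not a root of unity) and lamplighter-type twisting, which are the only obstructions to a $\gamma_k$-covered abelian-by-nilpotent group being virtually nilpotent. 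Then $\gamma_c(U_0)\le M$ and $[M,\underbrace{U_0,\dots,U_0}_{e}]=1$ for a bounded-index $U_0\le U$ and bounded $e$ yield $\gamma_{c+e}(U_0)=1$, completing the proof. Overall I expect the elimination of $S$ in part (1) to be the genuinely hard step; part (2), though delicate, is comparatively routine once (1) is available.
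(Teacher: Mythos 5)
Your reduction to finitely generated subgroups and the observation that $x\in B_m(G)$ forces $[x,g_1,\dots,g_{k-1}]\in B_{m^{2^{k-1}}}(G)$ are both fine, but the two steps you yourself flag as the crux are genuine gaps, and in both cases the difficulty is not merely technical: the intended reductions appear to be false. First, for part (1) you propose to eliminate $S$ by passing to a subgroup $H$ of $(n,s)$-bounded index with $H_{\gamma_k}\subseteq B_{n_1}(H)$. The ``B.~H.~Neumann-type counting argument'' you invoke does not apply here: $B_M(G)$ is not a subgroup and the translates $B_M(G)x_j$ are not cosets, so there is no covering lemma to run, and a single class $x_i^G$ of unbounded size can sit entirely inside $B_M(G)x_i$ (since $x_i^g=x_i[x_i,g]$ and the commutators $[x_i,g]$ may all have small classes), so the return set is all of $G$ and nothing is pruned. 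The paper never achieves such a reduction. Its induction on $s$ removes an element of $S$ only when some $\gamma_k$-value $x$ satisfies $n<|x^G|\le n^{2^k}$; when no such value exists it instead establishes the stability condition \eqref{stab1} and works with $H=\langle B_n(G)\rangle$, which is normal but of \emph{unbounded} index. The subgroup of part (1) is $T=[H,G,\dots,G]$, and its bounded index in $\gamma_{2k-1}(G)$ comes from conciseness of $\gamma_k$ (so that $\gamma_k(G/H)$ is finite of bounded order), Theorem~B of \cite{guma} applied to $\overline G=G/[H,G,\dots,G]$ where $\overline H\le Z_{k-1}(\overline G)$, and Baer's theorem --- not from any bounded-index BFC subgroup. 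Second, the ``index-transfer fact'' that $[\gamma_j(G):\gamma_j(H)]$ is bounded when $[G:H]$ is bounded is false: in the infinite dihedral group the cyclic subgroup $H$ of index $2$ has $\gamma_2(H)=1$ while $\gamma_2(G)$ is infinite.

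For part (2), the assessment that it is ``comparatively routine once (1) is available'' inverts the actual difficulty. In the paper, part (2) is the bulk of the work and is essentially independent of part (1): after reducing via Theorem \ref{soluble-main}, induction on derived length, \cite{khuma} and Hall's criterion to finite groups with $\gamma_3(G)$ abelian, the argument applies the commuting-probability theorem of \cite{DS} to the normal subgroups $N_{\bf g}=[N,g_1,\dots,g_{k-1}]$ (whose elements are $\gamma_k$-values, whence $\pr(N_{\bf g},G)\ge 1/(sn^2)$), and then treats metabelian $p$-groups (via an Engel-type claim, a descending chain of centralizers $C_{i,g}$, and Lemma \ref{lll}) and the coprime metabelian case (via Lemma \ref{coprime}) separately before reassembling with Fitting's theorem. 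Your alternative --- that the only obstructions to virtual nilpotence are Baumslag--Solitar eigenvalues and lamplighter twisting, to be excluded by the covering condition --- is an unsubstantiated heuristic, not an argument; nothing in the proposal shows that $U$ acts nilpotently on $M$ modulo bounded data. As written, neither part is proved.
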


Note that in the cases where $G_w\subseteq B_n(G)$ the above theorems have been established in \cite{dms} and \cite{S-BFC}. All these results are natural generalizations of the theorem of B. H. Neumann saying that if $G$ is an $n$-BFC-group, that is $G=B_n(G)$, then $G'$ has finite $n$-bounded order \cite{bhn}.

\section{A technical result} 

We recall that multilinear commutator words  (multilinear commutators, for short), are recursively defined as follows:  The word $w(x)=x$ in one variable is a multilinear commutator; if $u$ and $v$ are  multilinear commutators involving disjoint sets of variables, then the word $w=[u,v]$ is a multilinear commutator, and all multilinear commutators are obtained in this way. Well-known examples of multilinear commutators are the aforementioned lower central words $\gamma_k$ and the derived words $\delta_i(x_1,\dots,x_{2^i}),$ defined by $\delta_0=x_1$ and \[\delta_{i+1}(x_1,\dots,x_{2^{i+1}})=
[\delta_i(x_1,\dots,x_{2^i}),\delta_i(x_{2^i+1},\dots,x_{2^{i+1}})].\]

The main result of this section is the following quite technical proposition, which is a stronger form  of Theorem 1.1 in  
\cite{dms}. As usual, $\langle X\rangle$ denotes the (sub)group generated by a set $X$.

 \begin{proposition}\label{pavels-lemma} Let $w=w(x_1,\dots,x_k)$ be a multilinear commutator, $G$ a group and $A_1$,\dots ,$A_k$ normal subgroups of $G$.  Let $B= B_n(G)$, $H=\langle B \rangle$, and $Y=\{w(g_1,\dots,g_n)  \,:\,  g_i\in A_i\}$.
 Assume that $Y\subseteq B.$ Then the  commutator subgroup  $[H,\langle Y\rangle]$ has finite $(n,k)$-bounded order.
 \end{proposition}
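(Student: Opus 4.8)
The plan is to induct on the number of variables $k$ of the multilinear commutator $w$, the base case $k=1$ being essentially B. H. Neumann's BFC-theorem: here $w(x_1)=x_1$, so $Y=A_1\cap B$ and $\langle Y\rangle$ consists of elements with $\le n$ conjugates in $G$, hence (after passing to the relevant subgroup) $[H,\langle Y\rangle]$ is contained in the derived subgroup of a BFC-group and has $(n)$-bounded order. For the inductive step, write $w=[u,v]$ where $u=u(x_1,\dots,x_m)$ and $v=v(x_{m+1},\dots,x_k)$ are multilinear commutators in disjoint variables. Set $U_0=\{u(g_1,\dots,g_m):g_i\in A_i\}$ and $V_0=\{v(g_{m+1},\dots,g_k):g_j\in A_j\}$, and let $P=\langle U_0\rangle$, $Q=\langle V_0\rangle$, which are normal in $G$. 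Then $Y\subseteq[P,Q]$ and in fact $Y$ is the set of values $[p,q]$ with $p\in U_0$, $q\in V_0$; the hypothesis says all such commutators lie in $B=B_n(G)$.

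The key reduction is to show that a suitable large chunk of $U_0$ (or $V_0$) itself consists of elements with boundedly many conjugates, so that the inductive hypothesis applies. The mechanism for this is the standard Neumann-type counting argument adapted to covering: if $x\in U_0$ and $y\in V_0$, then $[x,y]\in B$ means $|[x,y]^G|\le n$; playing off commutator identities $[x,y_1y_2]=[x,y_2][x,y_1]^{y_2}$ and $[x_1x_2,y]=[x_1,y]^{x_2}[x_2,y]$ one controls how the conjugacy class of a product of commutators grows. First I would fix $y\in V_0$ and consider the map $x\mapsto[x,y]$; using that $\langle V_0\rangle=Q$ is generated by $V_0$ and that $[P,y]\subseteq B$, I would argue via the orbit-counting lemma (every element with $\le n$ conjugates lies in a subgroup whose index is bounded, or: the subgroup of elements commuting with a bounded set has bounded index) that $C_P(Q)$ — equivalently the kernel of the action of $Q$ on $P$ by conjugation restricted appropriately — has $(n,k)$-bounded index in... actually the cleaner route: show $[P,Q]$ is generated by boundedly many elements each having $\le n$ conjugates, hence $\langle Y\rangle$ is itself an $N$-BFC-type subgroup for bounded $N$, and then $[H,\langle Y\rangle]$ is handled by the BFC machinery together with the observation that $H=\langle B\rangle$ normalizes $\langle Y\rangle$.

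Concretely, the steps I would carry out are: (1) reduce to $w=[u,v]$ and introduce $P,Q$ as above; (2) show that the set of $q\in Q$ with $|q^{\langle Y\rangle}|$ bounded is "large" — precisely, that $\langle Y\rangle$ has a subgroup of bounded index centralized by a bounded-index subgroup of $Q$ — by combining the hypothesis $[U_0,V_0]\subseteq B$ with B. H. Neumann's lemma that a group covered by finitely many cosets of subgroups has one of those subgroups of bounded index; (3) apply the inductive hypothesis to $u$ (resp. $v$) with the subgroups $A_i$ replaced by suitable bounded-index normal subgroups, to conclude that $[\langle B\rangle, \langle U_0'\rangle]$ is bounded for an appropriate refined value-set $U_0'$; (4) assemble these using the Hall–Witt identity / the three-subgroups lemma and the fact that a product of boundedly many bounded-order normal subgroups is bounded, to bound $[H,\langle Y\rangle]$. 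The main obstacle I anticipate is step (2)–(3): transferring the covering condition on the $w$-values $[u,v]$-values into a genuine boundedness (not just covering) statement on the $u$-values or $v$-values, so that the inductive hypothesis of the proposition (which assumes an honest inclusion $Y\subseteq B$, not a covering $Y\subseteq BS$) can legitimately be invoked. Making that transfer work is precisely why the proposition is phrased with normal subgroups $A_i$ as parameters — it gives enough room to replace each $A_i$ by a bounded-index normal subgroup on which the values genuinely land in $B$ — and getting the bookkeeping of these replacements to close up is the crux of the argument.
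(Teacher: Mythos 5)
Your plan has a genuine gap at exactly the point you flag as the crux, namely steps (2)--(3). The hypothesis $Y\subseteq B_n(G)$ gives information only about the commutators $[p,q]$ with $p\in U_0$, $q\in V_0$; it yields no bound whatsoever on $|p^G|$ or $|q^G|$, even after shrinking the $A_i$ to bounded-index normal subgroups (already for $w=[x_1,x_2]$ and $G$ free nilpotent of class $2$ on infinitely many generators, all $w$-values are central while the $u$-values $U_0=G$ have unbounded classes). So the inductive hypothesis for $u$ or $v$, which requires an honest inclusion of the value set in some $B_{n'}(G)$, cannot be invoked, and the appeal to B.~H.~Neumann's covering lemma does not connect to anything: no finite covering by cosets of subgroups is present in the data. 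The fallback claim that $[P,Q]$ is generated by \emph{boundedly many} elements each with at most $n$ conjugates is also unsupported: $Y$ may be infinite and $\langle Y\rangle$ need not be boundedly generated (a bounded generating set inside $B$ would make $C_G(\langle Y\rangle)$ of bounded index and finish immediately by Schur, which is exactly the easy situation one cannot assume). Finally, even a bound on $|\langle Y\rangle'|$ would not suffice, since the target $[H,\langle Y\rangle]$ involves the typically much larger group $H=\langle B\rangle$.

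The paper's proof does not decompose $w=[u,v]$ at this stage at all. It inducts on the size of a subset $I\subseteq\{1,\dots,k\}$ of variable positions, establishing $[H,w_I(A_i;A_i\cap M_s)]\le R_s$ for a normal subgroup $R_s$ of bounded order and a normal subgroup $M_s$ of bounded index. The engine replacing your transfer step is the extremal argument of Lemma \ref{basic-light}: among the values $w(a_1u_1,\dots,a_ku_k)$ with $u_i\in A_i\cap M$, choose representatives $\tilde a_i$ for which $a=w(\tilde a_1,\dots,\tilde a_k)$ has the \emph{maximal} number of $H$-conjugates; maximality forces $[H,va]\le[H,a]$ for all perturbations $v$ lying in a suitable bounded-index normal subgroup, and $[H,a]^G$ has bounded order by Lemma \ref{2.3b} (a Schur-type argument via Lemma \ref{2.1g}). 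The combinatorial Lemmas \ref{uno2} and \ref{M2} then strip away the fixed representatives one variable position at a time. Without a substitute for this extremal device, your induction on $k$ does not close.
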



We will require combinatorial techniques developed in \cite{DMS-revised}.
 
Let $w=w(x_1, \dots, x_k)$ be a multilinear commutator. 
If $A_1,\dots,A_k$ are subsets of a group $G$, we write
 $w(A_1, \dots , A_k)$ for the subgroup generated by the  $w$-values $w(a_1,\dots,a_k)$ with $a_i\in A_i$.  
  
Let $I$ be a subset of $\{1,\dots,k\}$. 
  Suppose that we have a family $A_{i_1}, \dots , A_{i_s}$ of subsets of $G$ with indices running  over $I$ and another family 
  $B_{l_1}, \dots , B_{l_t}$ of subsets with indices  running  over $\{1, \dots ,k \} \setminus I.$ 
 We write 
 $$w_I(A_i ; B_i)$$ 
 for $w(X_1, \dots , X_k)$, where $X_j=A_j$ if $j \in I$, and $X_j=B_j$  otherwise. 
 On the other hand, whenever $a_i\in A_i$ for $i\in I$ and $b_i\in B_i$ for $i\in \{1,\dots,k\}\setminus I$, the symbol 
 $w_I(a_i;b_i)$ stands for the element $w(g_1, \dots , g_k)$, where $g_j=a_j$ if $j \in I$, and $g_j=b_j$ otherwise.  
 
The following results are  Lemma 2.4, Lemma 2.5  and Lemma 4.1 of \cite{DMS-revised}, respectively. 
 
\begin{lemma}\label{2.1-conjugates} 
Let $w=w(x_1, \dots, x_k)$ be a multilinear commutator. 
Assume that  $M$ is a normal subgroup of a group $G$. 
Let $ g_1, \dots , g_k \in G$, $h \in M$  and fix $l \in \{1, \dots, k\}$. 
Then there exist 
 $y_j \in g_j^M$,  for  $j=1,\dots, k$, such that 
 \begin{eqnarray*}
 w_{\{l\}}(g_lh; g_i)=w(y_1, \dots,y_k) w_{\{l\}}(h; g_i). 
\end{eqnarray*}
\end{lemma}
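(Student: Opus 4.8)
The plan is to argue by induction on the way the multilinear commutator $w$ is built up from the one-variable word $x$ (equivalently, strong induction on the number of variables $k$). Two elementary facts will be used throughout. First, since conjugation is an automorphism, $w(g_1,\dots,g_k)^x=w(g_1^{\,x},\dots,g_k^{\,x})$ for every $x\in G$; in particular, conjugating a $w$-value by an element of $M$ replaces each entry $g_j$ by an element of $g_j^M$. Second, I will need the absorption property that if at least one argument of $w$ lies in the normal subgroup $M$, then $w(g_1,\dots,g_k)\in M$. This is itself a short induction on the formation of $w$: it is clear for $w(x)=x$, and if $w=[u,v]$ then the distinguished argument lies in the block of $u$ or of $v$, so the corresponding sub-value lies in $M$, and a commutator $[a,b]$ with $a\in M$ or $b\in M$ lies in $M$ by normality.

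For the base case $w(x)=x$ we have $k=1$, $l=1$, and the identity reads $g_1h=y_1\,h$, which holds with $y_1=g_1\in g_1^M$. For the inductive step write $w=[u,v]$, with $u$ and $v$ multilinear commutators on disjoint variable sets, and suppose first that $l$ lies in the block of $u$. Put $U=u(\dots,g_lh,\dots)$ and $U_0=u(\dots,h,\dots)$ (with $h$ in the $l$-th slot), and let $V$ be the value of $v$ on its variables (all taken from the $g_i$). By the inductive hypothesis applied to $u$ we may write $U=P\,U_0$, where $P$ is a $u$-value whose entries lie in the corresponding $g_j^M$; moreover $U_0\in M$ by absorption. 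Expanding with $[ab,c]=[a,c]^{b}[b,c]$ gives
\[
w_{\{l\}}(g_lh;g_i)=[P\,U_0,\,V]=[P,V]^{U_0}\,[U_0,V].
\]
Here $[U_0,V]=w_{\{l\}}(h;g_i)$ is exactly the required second factor, while $[P,V]^{U_0}$ is a conjugate by $U_0\in M$ of a $w$-value whose entries already lie in the sets $g_j^M$ (the $u$-block entries by induction, the $v$-block entries trivially). Hence it equals $w(y_1,\dots,y_k)$ with every $y_j\in g_j^M$, and this case is complete.

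The case where $l$ lies in the block of $v$ is where the only genuine subtlety arises. Now let $U$ be the value of $u$ on its variables, set $V=v(\dots,g_lh,\dots)$, and by induction on $v$ write $V=Q\,V_0$ with $V_0=v(\dots,h,\dots)\in M$ and $Q$ a $v$-value with entries in the appropriate $g_j^M$. The identity $[a,bc]=[a,c][a,b]^{c}$ yields
\[
w_{\{l\}}(g_lh;g_i)=[U,Q\,V_0]=[U,V_0]\,[U,Q]^{V_0}=W_0\,\Omega,
\]
where $W_0=[U,V_0]=w_{\{l\}}(h;g_i)$ and $\Omega=[U,Q]^{V_0}=w(z_1,\dots,z_k)$ with all $z_j\in g_j^M$ (again conjugating by $V_0\in M$). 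This produces the two desired factors but in the wrong order. To repair this I use that $W_0=w_{\{l\}}(h;g_i)\in M$ by absorption and rewrite $W_0\,\Omega=\Omega^{W_0^{-1}}W_0$; since $W_0^{-1}\in M$, conjugating $\Omega=w(z_1,\dots,z_k)$ by it keeps every entry inside $g_j^M$, so $\Omega^{W_0^{-1}}=w(y_1,\dots,y_k)$ with $y_j\in g_j^M$ and $w_{\{l\}}(g_lh;g_i)=w(y_1,\dots,y_k)\,w_{\{l\}}(h;g_i)$, as required.

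The main point to watch is precisely this ordering discrepancy between the two halves of the inductive step: the identities for $[ab,c]$ and for $[a,bc]$ split with opposite conventions, so the factor $w_{\{l\}}(h;g_i)$ naturally ends up on the left in the $v$-block case. The reordering that fixes this is legitimate only because that factor lies in $M$, which is exactly what guarantees the shuffled entries remain $M$-conjugate to the $g_j$. Everything else is bookkeeping driven by the two commutator identities and the two absorption/conjugation facts isolated at the start.
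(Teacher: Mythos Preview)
Your proof is correct. Note that the paper does not actually prove this lemma: it is quoted verbatim as Lemma~2.4 of \cite{DMS-revised}, so there is no in-paper argument to compare against. Your induction on the formation of $w$, using the identities $[ab,c]=[a,c]^b[b,c]$ and $[a,bc]=[a,c][a,b]^c$ together with the absorption of $M$-entries and the stability of the classes $g_j^M$ under $M$-conjugation, is exactly the natural route and is essentially how the result is established in the cited source. The one nontrivial point---the factor reversal in the $v$-block case, fixed via $W_0\Omega=\Omega^{W_0^{-1}}W_0$ with $W_0\in M$---is handled correctly.
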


\begin{lemma}\label{uno2}
Let $w=w(x_1, \dots, x_k)$ be a multilinear commutator. Let $G$ be a group and let $A_1,\dots,A_k,M$ be  normal subgroups of $G$. Assume that $$w( y_1(A_1\cap M),\dots,y_k(A_k\cap M))=1$$ for some elements $y_i\in A_i$. Then 
$$w_I(y_i(A_i\cap M);A_i\cap M )=1,$$
for any subset $I$ of $\{1,\dots,k\}$. 
\end{lemma}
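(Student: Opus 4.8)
The plan is to argue by downward induction on $|I|$, reducing the whole statement to a single claim: deleting one index from $I$ preserves the vanishing of the associated set of $w$-values. Write $M_i=A_i\cap M$ for brevity; since $A_i$ and $M$ are both normal in $G$, each $M_i$ is a normal subgroup of $G$. The hypothesis is exactly the assertion $w_{\{1,\dots,k\}}(y_iM_i;M_i)=1$, which serves as the base case $I=\{1,\dots,k\}$. For the inductive step I fix a subset $I_0$ with $|I_0|<k$, choose any $l\notin I_0$, put $I=I_0\cup\{l\}$, and — assuming $w_I(y_iM_i;M_i)=1$ — prove that $w_{I_0}(y_iM_i;M_i)=1$. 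Since every subset of $\{1,\dots,k\}$ is obtained from $\{1,\dots,k\}$ by removing indices one at a time, this suffices.

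The key algebraic observation is that conjugation by elements of $M$ fixes each of the relevant cosets setwise. Indeed, for each $i$ one has $[A_i,M]\subseteq A_i\cap M=M_i$, because $[A_i,M]\subseteq A_i$ (as $A_i\trianglelefteq G$) and $[A_i,M]\subseteq M$ (as $M\trianglelefteq G$). Consequently, for any $x\in A_i$ and $m\in M$ we have $x^m=x[x,m]\in xM_i$. Taking $x=y_i$ this gives $(y_iM_i)^m=y_iM_i$, and taking $x\in M_i$ it gives $M_i^m=M_i$. Thus the $M$-conjugation action preserves each coset $y_iM_i$ and each subgroup $M_i$.

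With this in hand the inductive step runs as follows. The generators of $w_{I_0}(y_iM_i;M_i)$ are values $w(g_1,\dots,g_k)$ with $g_j\in y_jM_j$ for $j\in I_0$ and $g_j\in M_j$ for $j\notin I_0$; in particular $g_l=h$ for some $h\in M_l\subseteq M$. Regarding this as the position-$l$ entry, I apply Lemma~\ref{2.1-conjugates} with the element $y_l$ in position $l$ and the chosen $g_j$ $(j\ne l)$ elsewhere, obtaining elements $z_l\in y_l^{M}$ and $z_j\in g_j^{M}$ $(j\ne l)$ with
$$w_{\{l\}}(y_lh;g_i)=w(z_1,\dots,z_k)\,w_{\{l\}}(h;g_i).$$
The left-hand side is a value whose position-$l$ entry $y_lh$ lies in $y_lM_l$ and whose remaining entries follow the $I$-pattern, so it lies in $w_I(y_iM_i;M_i)=1$. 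By the coset-invariance just established, each $z_j$ lies in $y_jM_j$ for $j\in I$ and in $M_j$ for $j\notin I$, whence $w(z_1,\dots,z_k)\in w_I(y_iM_i;M_i)=1$ as well. The displayed identity therefore collapses to $w_{\{l\}}(h;g_i)=1$; as $h$ ranges over $M_l$ and the other $g_j$ over their cosets, this annihilates every generator of $w_{I_0}(y_iM_i;M_i)$, completing the step.

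The only genuinely delicate point is the verification that the $M$-conjugates $z_j$ produced by Lemma~\ref{2.1-conjugates} stay inside the prescribed cosets; everything else is bookkeeping on the index sets $I$ and $I_0=I\setminus\{l\}$. That delicate point is exactly what the inclusion $[A_i,M]\subseteq M_i$ — available precisely because all the $A_i$ and $M$ are normal in $G$ — is designed to supply, so once this inclusion is recorded the argument is essentially forced. I anticipate no serious obstacle beyond keeping the two index patterns aligned across the application of Lemma~\ref{2.1-conjugates}.
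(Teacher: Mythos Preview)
Your argument is correct: the downward induction on $|I|$, together with the observation $[A_i,M]\subseteq A_i\cap M$ ensuring that the $M$-conjugates produced by Lemma~\ref{2.1-conjugates} remain in the appropriate cosets, yields the result cleanly. The present paper does not supply a proof but cites this as Lemma~2.5 of \cite{DMS-revised}; your approach is the natural one and matches the argument given there.
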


\begin{lemma}\label{M2} 
Let $w=w(x_1, \dots, x_k)$ be a multilinear commutator and let $I$ be a subset of $\{1, \dots ,k \}$.
Let $M$ be a normal subgroup of  a group $G$. 
 Assume that 
\[ w_J (A_j; A_j\cap M)=1 \quad \textrm{for every}\ J \subsetneq I.\]
Suppose we are given elements  $y_i \in A_i$ with $i \in I$ and  elements $m_r \in A_r\cap M$ with $r \in \{1, \dots, k\}$. 
 Then we have 
\[w_I(y_im_i; m_i)=w_I(y_i;m_i).\] 
\end{lemma}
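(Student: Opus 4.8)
The plan is to expand $w_I(y_im_i;m_i)$ by multilinearity in the slots belonging to $I$, using Lemma \ref{2.1-conjugates} repeatedly, and to show that every contribution except the ``all-$y$'' one is killed by the hypothesis. I would peel the indices of $I$ one at a time. Peeling $l\in I$ by Lemma \ref{2.1-conjugates} — taking the slot-$l$ argument to be $g_lh$ with $g_l=y_l$ and $h=m_l$, and leaving the remaining arguments ($y_im_i$ for $i\in I\setminus\{l\}$ and $m_j$ for $j\notin I$) in place — replaces $w_I(y_im_i;m_i)$ by a leading factor $w(z_1,\dots,z_k)$, whose arguments $z_i\in g_i^M$ are $M$-conjugates of the original ones, times the factor $w_{\{l\}}(m_l;g_i)$.

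In this second factor slot $l$ holds $m_l\in A_l\cap M$, the slots of $I\setminus\{l\}$ hold elements of the respective $A_i$, and the slots outside $I$ hold elements of $A_j\cap M$; hence it is a value of $w_{I\setminus\{l\}}(A_j;A_j\cap M)$. Since $I\setminus\{l\}\subsetneq I$, this subgroup is trivial by hypothesis, so the factor is $1$. Iterating over $I$, the element $w_I(y_im_i;m_i)$ becomes a product whose only possibly non-trivial term is the one in which every slot of $I$ has kept its $y$-part.

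The crucial and most delicate point is that this surviving term equals $w_I(y_i;m_i)$ exactly, i.e.\ that the $M$-conjugations accumulated along the peeling are invisible. Here I would exploit the normality of the $A_i$ and of $M$ in two ways. First, a conjugate of $y_i$ (resp.\ of $m_j$) by an element of $M$ again lies in $A_i$ (resp.\ in $A_j\cap M$), since $[A_i,M]\subseteq A_i\cap M$; so each conjugation alters an argument only by a factor from the relevant $A\cap M$. Second, and this is what keeps the bookkeeping within reach, when such an alteration is expanded out the resulting correction is a commutator in which an already-formed $w$-subword appears as one entry; because that subword lies in each $A_i$, it may be absorbed as a single argument, so the correction is again a genuine $w$-value, now with at least one slot of $I$ sitting inside $A_i\cap M$ — that is, a $w_J(A_j;A_j\cap M)$-value with $J\subsetneq I$. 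By hypothesis all these corrections vanish, leaving $w_I(y_i;m_i)$.

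I expect the main obstacle to be precisely the control of these conjugations: one must organize the expansion, and the order of peeling, so that at every stage each non-leading contribution is provably a proper-subset configuration $w_J(A_j;A_j\cap M)$ with $J\subsetneq I$ (the danger being contributions in which only slots outside $I$ have been perturbed, which would give $J=I$ and escape the hypothesis). Lemma \ref{2.1-conjugates} supplies the one-variable step, Lemma \ref{uno2} lets one pass from the triviality of full coset products to that of the partial products that arise, and the normality of the $A_i$ is the structural fact that collapses each overlong correction commutator back to the length of $w$. The base case $I=\emptyset$ and the vanishing of the explicit error factors are routine; all of the work is concentrated in this conjugation accounting.
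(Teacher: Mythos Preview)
The paper does not prove this lemma; it is quoted without proof as Lemma~4.1 of \cite{DMS-revised}, so there is no in-paper argument to compare against and your sketch must stand on its own.

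Your plan has a real gap, and you have in fact put your finger on it. After peeling an index $l\in I$ via Lemma~\ref{2.1-conjugates}, the tail factor is indeed a $w_{I\setminus\{l\}}$-value and vanishes; but the leading factor $w(z_1,\dots,z_k)$ has $z_l\in y_l^{M}$, so $z_l=y_l[y_l,c]$ with $[y_l,c]\in A_l\cap M$, while the remaining $I$-slots are $M$-conjugates of $y_im_i$, hence again of the form $y_i\cdot(\text{element of }A_i\cap M)$. Thus one application of Lemma~\ref{2.1-conjugates} merely turns $w_I(y_im_i;m_j)$ into another expression $w_I(y_i\tilde m_i;\tilde m_j)$ of exactly the same shape, with new $\tilde m$'s. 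Iterating over $I$ does not converge to $w_I(y_i;m_j)$; it just cycles through elements of the same type.

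Your proposed resolution---that each conjugation-induced correction ``is again a genuine $w$-value, now with at least one slot of $I$ sitting inside $A_i\cap M$''---is precisely the step that is not justified. The discrepancy $w_I(y_i\tilde m_i;\tilde m_j)\cdot w_I(y_i;m_j)^{-1}$ is not visibly a $w_{J}$-value with $J\subsetneq I$; it lies only in the subgroup $w_I(A_i;A_i\cap M)$, about which the hypothesis says nothing. Lemma~\ref{2.1-conjugates} only trades an $m$-factor for an $M$-conjugation and never eliminates it, and Lemma~\ref{uno2} goes in the wrong direction for what you need. The argument in \cite{DMS-revised} instead inducts on the structure of $w=[u,v]$ as a multilinear commutator: writing $w_I(y_im_i;m_j)=[u_{I_1}(\cdots),v_{I_2}(\cdots)]$ and expanding via commutator identities, the error terms that appear are of the form $[u_{J_1}(\cdots),v_{I_2}(\cdots)]$ or $[u_{I_1}(\cdots),v_{J_2}(\cdots)]$ with $J_1\subsetneq I_1$ or $J_2\subsetneq I_2$, and these are honest $w_J$-values with $J\subsetneq I$. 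That structural decomposition is exactly the leverage your index-by-index peeling cannot reach.
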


Let $G$ be a group generated by a set $X$ such that $X = X^{-1}$. Given
an element $g\in G$, we write $l_X(g)$ for the minimal number $l$ with the
property that $g$ can be written as a product of $l$ elements of $X$. Clearly, $l_X(g)=0$ if and only if $g = 1$. We call $l_X(g)$ the length of $g$ with respect to $X$. The following result is Lemma 2.1 in \cite{dieshu}.

\begin{lemma}\label{2.1g}  Let $T$ be a group generated by a set $X = X^{-1}$ and let $M$ be a subgroup of finite index $n$ in $T$. Then each coset $Mb$ contains an element $g$ such that $l_X(g)\le n-1$.
\end{lemma}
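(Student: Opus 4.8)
The plan is to interpret $l_X$ through the Schreier coset graph and reduce the statement to the elementary fact that shortest paths in a finite connected graph are short.

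First I would set up the (right) coset graph $\Gamma$: its vertices are the $n$ right cosets in $M\backslash T$, and for each coset $Mc$ and each $x\in X$ I put an edge joining $Mc$ to $Mcx$. Because $X=X^{-1}$, the edge produced by $x$ is the same as the edge produced by $x^{-1}$ traversed backwards, so $\Gamma$ is an undirected (multi)graph; and because $X$ generates $T$, the natural right-multiplication action of $T$ on $M\backslash T$ is transitive, which forces $\Gamma$ to be connected. It is worth noting that the hypothesis $X=X^{-1}$ is exactly what guarantees that every element of $T$ is a product of elements of $X$, so that $l_X(g)$ is finite for every $g$ and the notion in the statement is meaningful.

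Second, I would record the dictionary between word length and path length. If $g=x_1x_2\cdots x_\ell$ with each $x_i\in X$, then $M,\ Mx_1,\ Mx_1x_2,\ \dots,\ Mx_1\cdots x_\ell=Mg$ is a walk of length $\ell$ in $\Gamma$ from the base vertex $M$ to $Mg$; conversely, a walk of length $\ell$ from $M$ to a coset $C$ reads off a word $x_1\cdots x_\ell$ whose product lies in $C$ and has $X$-length at most $\ell$. Hence the minimal $X$-length of an element of a coset $C$ equals the graph distance $d_\Gamma(M,C)$. The counting step is then immediate: in a connected graph on $n$ vertices every vertex lies at distance at most $n-1$ from the base vertex, since a geodesic passes through pairwise distinct vertices and there are only $n$ of them. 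Applying this to $C=Mb$ and reading off the associated word yields an element $g\in Mb$ with $l_X(g)\le n-1$.

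Equivalently, and bypassing any graph-theoretic vocabulary, I would argue by induction on length. Let $R_j$ denote the set of cosets meeting the ball $\{g: l_X(g)\le j\}$, so $R_0=\{M\}$ since $1$ is the only element of length $0$. Whenever $R_j$ is a proper subset of $M\backslash T$, it is strictly enlarged by $R_{j+1}$: otherwise $R_j$ would be closed under right multiplication by every $x\in X$ (as $l_X(gx)\le l_X(g)+1$), hence, because $X=X^{-1}$ generates $T$, closed under the whole transitive $T$-action, contradicting properness. Thus $|R_j|\ge j+1$ until all $n$ cosets are covered, giving $R_{n-1}=M\backslash T$, which is again the assertion.

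I do not expect a genuine obstacle: the mathematical content is only that a shortest path in an $n$-vertex connected graph uses at most $n-1$ edges. The two points deserving a little care are the bookkeeping of handedness — these are right cosets $Mb$, on which $T$ acts by right multiplication — and the two invocations of $X=X^{-1}$, one to make $l_X$ finite and one to make the relevant reachability symmetric and transitive.
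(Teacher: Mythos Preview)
Your argument is correct. Note that the present paper does not actually prove this lemma; it merely quotes it as Lemma~2.1 of \cite{dieshu}, so there is no in-paper proof to compare against. The Schreier coset graph interpretation you give, together with the observation that a geodesic in a connected $n$-vertex graph has length at most $n-1$, is the standard proof of this fact, and your alternative ball-growth induction is the same argument with the graph-theoretic language unpacked.
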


Recall that if $G$ is a group, $a\in G$ and $H$ is a subgroup of $G$, then $[H,a]$ denotes the subgroup of $G$ generated by all commutators of the form $[h,a]$, where $h\in H$. It is well-known that $[H, a]$ is normalized by $a$ and $H$.

In the sequel we will assume the hypotheses of Proposition  \ref{pavels-lemma}; therefore
 $w=w(x_1, \dots, x_k)$ is a multilinear commutator, 
 $A_1$,\dots ,$A_k$ are normal subgroups of a group $G$, $H= \langle B \rangle$, where $B=B_n(G)$,  
  and every element of the set
 \[Y=\{w(g_1,\dots,g_n)  \,:\,  g_i\in A_i\}\]
 has at most $n$ conjugates in $G$. 
  
\begin{lemma} \label{2.3b} 
The subgroup $[H,y]^G$ has $n$-bounded order for every $y\in Y$.
\end{lemma}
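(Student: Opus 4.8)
The plan is to fix $y\in Y$ and analyze the normal closure $[H,y]^G$ by decomposing it into contributions indexed by cosets of the centralizer $C_G(y)$. Since $y\in Y\subseteq B=B_n(G)$, the centralizer $C_G(y)$ has index at most $n$ in $G$, so $y$ has at most $n$ conjugates $y=y_1,\dots,y_m$ with $m\le n$. Because $[H,y]^G$ is generated by the subgroups $[H,y_i]^G$, and each $[H,y_i]$ is a conjugate of $[H,y]$, it suffices to bound $|[H,y]^G|$ in terms of the $|[H,y_i]|$; so the crux is to show that $[H,y]$ itself has $n$-bounded order, and that it is normal (or nearly so) in a bounded-index subgroup, so that its normal closure stays bounded.

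The key step is to bound $|[H,y]|$. Recall $H=\langle B\rangle$ and $B=B^{-1}$ since $B_n(G)$ is symmetric. Write $D=C_G(y)$; this has index $\le n$ in $G$, hence also $H\cap D$ has index $\le n$ in $H$. First I would observe that for $h\in H\cap D$ we have $[h,y]=1$, so $[H,y]$ is generated by the finitely many commutators $[g_j,y]$ where, by Lemma \ref{2.1g} applied to $H$ with generating set $B$ and the subgroup $H\cap D$ of index $\le n$, we may take coset representatives $g_j$ of $H\cap D$ in $H$ of $B$-length at most $n-1$. Thus $[H,y]$ is generated by at most $n$ elements of the form $[b_{1}\cdots b_{r},y]$ with $r\le n-1$ and each $b_i\in B$. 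Expanding such a commutator by the standard identity $[xz,y]=[x,y]^z[z,y]$ shows it lies in the subgroup generated by the conjugates $[b_i,y]^{u}$ for suitable products $u$ of the $b_i$'s. So it is enough to control, for each $b\in B$, the normal closure in $[H,y]^G$ of $[b,y]$, together with the fact that only boundedly many generators are needed modulo the action; here one uses that $[b,y]\in[H,y]$, that $b$ has $\le n$ conjugates, and that $y$ has $\le n$ conjugates, to see that the relevant generating set, up to conjugacy, has $n$-bounded size.

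Next I would pin down why each such generator $[b,y]$ has $n$-bounded order and small conjugacy class. Since $b\in B_n(G)$, the element $[b,y]=b^{-1}b^{y}$ lies in $\langle b^G\rangle$ and, more usefully, $|[b,y]^G|\le |b^G|\cdot|(b^{y})^G|=|b^G|^2\le n^2$; so $[b,y]\in B_{n^2}(G)$. Likewise every product of boundedly many conjugates of such elements lies in $B_{N}(G)$ for an $n$-bounded $N$. Therefore the subgroup $[H,y]^G$ is generated by an $n$-bounded number of elements (up to conjugacy) each lying in $B_N(G)$ for an $n$-bounded $N$; an appeal to B. H. Neumann's BFC-theorem \cite{bhn}, or rather to the quantitative statement that a group generated by boundedly many elements of bounded conjugacy-class size has bounded-by-abelian structure, together with the observation that $[H,y]^G$ is in fact generated by boundedly many $B_N(G)$-elements forming a normal set, yields that $[H,y]^G$ has $n$-bounded order. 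More precisely I would argue: let $V$ be the (normal) set of all conjugates of the finitely many generators $[g_j,y]$; then $V\subseteq B_N(G)$ with $N$ $n$-bounded, $\langle V\rangle=[H,y]^G$, and $\langle V\rangle$ is generated by $V$ whose image in $\langle V\rangle/Z$ is finite of bounded size (where $Z=Z(\langle V\rangle)$), so $\langle V\rangle/Z$ is finite of $n$-bounded order, hence $\langle V\rangle'$ is finite of $n$-bounded order by Neumann, and since $\langle V\rangle$ is generated by boundedly many elements each of finite order dividing $N$ modulo the center... — at this point the cleanest route is to quote that a BFC-group generated by $d$ elements from $B_N$ has order bounded in terms of $d$ and $N$ when it is finite, which it is here since $[H,y]^G\subseteq\langle b^G : b\in B\text{ involved}\rangle$ is finitely generated and residually finite considerations reduce to the finite case.

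The main obstacle I anticipate is the bookkeeping in the commutator expansion: showing that $[H,y]^G$ really is generated, as a normal subgroup, by an $n$-bounded set of elements of bounded conjugacy length, rather than by an unbounded product that only a posteriori collapses. The identity $[xz,y]=[x,y]^z[z,y]$ forces conjugates $[b_i,y]^{b_{i+1}\cdots b_r}$ into the picture, and one must check these conjugating elements can be absorbed — i.e. that $[b_i,y]^{u}$ is still a conjugate (in $G$) of a bounded-class element, which it is since conjugation does not change conjugacy-class size. So the set $V$ above, consisting of all $G$-conjugates of the $[g_j,y]$, does generate $[H,y]^G$ and does lie in $B_N(G)$; the finiteness and the explicit $n$-bound then follow from the BFC machinery exactly as in \cite{dieshu, dms}. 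I would model the write-up on the corresponding step in \cite{dms}, adapting it to carry the parameter $s=1$ (since here $S=\{1\}$ effectively, as $Y\subseteq B$).
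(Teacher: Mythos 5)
Your first step coincides with the paper's: since $C_H(y)$ has index at most $n$ in $H=\langle B\rangle$, Lemma \ref{2.1g} produces at most $n$ coset representatives $h_1,\dots,h_n$ with $l_B(h_i)\le n-1$ such that $[H,y]$ is generated by the commutators $[h_i,y]$. The gap is in how you pass from this to a bound on $|[H,y]^G|$. You note that these generators and their conjugates lie in $B_N(G)$ for an $n$-bounded $N$ and then invoke ``the quantitative statement that a group generated by boundedly many elements of bounded conjugacy-class size has bounded order''. No such statement is true: a group generated by $d$ elements each with at most $N$ conjugates need not be finite (an infinite cyclic group, $d=N=1$), and even when finite its order is not bounded in terms of $d$ and $N$ (a cyclic group of arbitrary order, again $d=N=1$). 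What such hypotheses yield by the standard argument is only that the centre has bounded index and hence, by Schur's theorem, that the \emph{derived subgroup} is finite of bounded order; the abelian quotient is completely uncontrolled. Your attempts to patch this (``when it is finite'', ``each of finite order dividing $N$ modulo the center'', ``residually finite considerations reduce to the finite case'') establish neither finiteness nor a bound, and the claim that the generators have finite bounded order modulo the centre is unjustified.

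The missing idea is to place $[H,y]$ inside a derived subgroup rather than to bound directly the group it generates. Set $T=\langle y,h_1,\dots,h_n\rangle$. Each generator of $T$ has an $n$-bounded conjugacy class in $G$ ($y$ because $y\in Y\subseteq B$, and $h_i$ because $l_B(h_i)\le n-1$ gives $|h_i^G|\le n^{n-1}$), so the centralizer of $T$ in $H$ has $n$-bounded index, whence $Z(T)$ has $n$-bounded index in $T$ and Schur's theorem bounds $|T'|$. Since each $[h_i,y]$ is a commutator of elements of $T$, we get $[H,y]\le T'$, so $|[H,y]|$ is $n$-bounded. Your closing observation is then essentially correct and matches the paper: $H$ is normal, so the $G$-conjugates of $[H,y]$ are the at most $n$ subgroups $[H,y^g]$, these normalize each other, and $[H,y]^G$ is their product, hence of $n$-bounded order.
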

\begin{proof}  Choose $y\in Y$.  Since $C_H (y)$ has index at most $n$ in $H=\langle B\rangle$, by Lemma \ref{2.1g} we can choose elements $h_1,\dots,h_n$ such that $l_B(h_i)\le n-1$ and $[H,y]$ is generated by the commutators $[h_{i},y]$. Note that, as  $l_B(h_i)\le n-1$, each $h_i$ has at most $n^{n-1}$ conjugates in $G$, so $C_H (h_i)$ has finite $n$-bounded index in $H$. As $C_H (y)$ has  index at most $n$ in $H,$ the centralizer $C$ of $ T = \langle y, h_1 , \dots, h_n\rangle$ in $H$ 
has finite $n$-bounded index in $H$.  Since $C \cap T$ is an $n$-bounded index subgroup contained in the center of $T$, 
 we deduce  that $Z(T )$ has  $n$-bounded index in $T$. Thus,
Schur's theorem \cite[10.1.4]{Rob} tells us that $T'$ has finite $n$-bounded order.
 As $[H, y]\le T'$,   it follows that $[H,y]$ has $n$-bounded order. 
 
 Observe that $[H,y]$ has at most $n$ conjugates in $G$ and the conjugates normalize each other. Thus, $[H,y]^G$ is a product of at most $n$ subgroups that normalize each other and have $n$-bounded order. The lemma follows.
\end{proof}

The following lemma can be seen as a generalization of Lemma 2.4 in \cite{dms}. It plays a central role in our arguments.  

\begin{lemma} \label{basic-light}  
Let $M$ be  a normal subgroup of $G$ of finite index $j$ and choose $a_i\in A_i$ for $i=1,\dots,k$. Then there exist elements $\tilde a_i\in a_i(M\cap A_i)$, for $i=1,\dots,k$, and a normal subgroup $\tilde M$ of $M$ 
of finite $(j,n)$-bounded index, such that the order of $[H,w(\tilde a_1 (A_1\cap\tilde M) ,\dots,\tilde a_k (A_k\cap\tilde M))]^G$ is finite and $n$-bounded.
\end{lemma}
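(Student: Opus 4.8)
\emph{The plan.} Set $W:=w(\tilde a_1(A_1\cap\tilde M),\dots,\tilde a_k(A_k\cap\tilde M))$ and $v_0:=w(\tilde a_1,\dots,\tilde a_k)$, and let $v=w(\tilde a_1m_1,\dots,\tilde a_km_k)$, with $m_i\in A_i\cap\tilde M$, range over the natural generators of $W$. Each such $v$ is a $w$-value with $i$th argument in $A_i$, so $v\in Y\subseteq B$, and by Lemma~\ref{2.3b} the subgroup $[H,v]^G$ has $n$-bounded order; moreover, since $W$ is generated by these $v$, one has $[H,W]^G=\langle\,[H,v]^G : v\,\rangle$. Hence it suffices to find $\tilde a_i\in a_i(A_i\cap M)$, a normal subgroup $\tilde M$ of $M$ of $(j,n)$-bounded index — which we may take normal in $G$ — and a normal subgroup $C\trianglelefteq G$ of $n$-bounded order such that every generator $v$ of $W$ is congruent to $v_0$ modulo $C$; for then $[H,v]^G\le\langle C,[H,v_0]^G\rangle$, and the right-hand side has $n$-bounded order by Lemma~\ref{2.3b} applied to $v_0\in Y$. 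So the whole problem reduces to making $w(\tilde a_1m_1,\dots,\tilde a_km_k)$ independent of the choice of the $m_i\in A_i\cap\tilde M$ up to a bounded normal subgroup.

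\emph{How I would do it.} Expanding $w(\tilde a_1m_1,\dots,\tilde a_km_k)$ by repeated use of Lemma~\ref{2.1-conjugates}, one coordinate at a time, writes it as a product of boundedly many $w$-values, each of which again lies in $Y\subseteq B$; the conjugating elements so produced all lie in $M$, which is why the whole argument must be carried out with $G$-normal closures and why Lemma~\ref{2.3b} is phrased for $[H,y]^G$ rather than $[H,y]$. Choosing the $\tilde a_i$ inside their cosets and shrinking $\tilde M$ serves precisely to make these pieces — which a priori form an infinite family — reduce, modulo a suitable $n$-bounded normal subgroup, to boundedly many. I would organize this as an induction over the subsets $J\subseteq\{1,\dots,k\}$ in increasing order of size, maintaining an ascending chain of $G$-normal subgroups $C$ of $n$-bounded order together with a descending chain of $G$-normal subgroups $\tilde M$ of $(j,n)$-bounded index: at the step for $J$, Lemmas~\ref{uno2} and~\ref{M2}, fed with the information secured at the earlier steps, control the dependence of the $J$-th family of pieces on their arguments — Lemma~\ref{M2} (with $J$ in the role of $I$) shows that, modulo the current $C$, the value is unchanged under $\tilde M$-perturbations of the arguments in the positions indexed by $J$, while Lemma~\ref{uno2} propagates a full vanishing to all the partial ones — and then, since the pieces lie in $Y\subseteq B$, the argument of Lemma~\ref{2.3b} itself (reduction to boundedly many short generators via Lemma~\ref{2.1g}, followed by Schur's theorem) lets one pass to a further $(j,n)$-bounded-index $\tilde M$ on which the $J$-th family is absorbed into an enlarged, still $n$-bounded, $C$. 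The elements $\tilde a_i$ are fixed in the course of this process; after the at most $2^k$ steps one reaches $\tilde a_i$, $\tilde M$ and $C$ with all bounds depending only on $k$, $n$ and $j$.

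\emph{The main obstacle.} The difficulty is to run the two chains together while keeping both sets of bounds under control across all $2^k$ steps, and to verify at each step that the hypotheses of Lemmas~\ref{uno2} and~\ref{M2} genuinely hold in the current quotient $G/C$ for the current $\tilde a_j$ and $\tilde M$. The small subsets are the heaviest, especially $J=\emptyset$, where one must force $w(A_1\cap\tilde M,\dots,A_k\cap\tilde M)$ into $C$: this is essentially a lower-dimensional instance of the same problem, and it is where the combinatorial identities of \cite{DMS-revised} are used at full strength. Making this induction precise, and keeping the bookkeeping of the conjugating elements from Lemma~\ref{2.1-conjugates} coherent throughout, is the technical heart of the argument.
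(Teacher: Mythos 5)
Your opening reduction is fine: since each generator $v=w(\tilde a_1m_1,\dots,\tilde a_km_k)$ of $W$ lies in $Y\subseteq B$, Lemma~\ref{2.3b} bounds each $[H,v]^G$ individually, and the problem is indeed to control the whole (a priori infinite) family at once. But the way you propose to do this contains a genuine gap, in two respects. First, your intermediate target is strictly stronger than the lemma: you ask that \emph{all} perturbed values $v$ be congruent to $v_0$ modulo a single normal subgroup $C$ of $n$-bounded order, i.e.\ that the set $w(\tilde a_1(A_1\cap\tilde M),\dots,\tilde a_k(A_k\cap\tilde M))$ lie in one coset of a bounded normal subgroup. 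Nothing in the hypotheses forces the value set to cluster like this, and neither the statement nor the paper's proof delivers it; what is actually needed (and provable) is only the weaker containment $[H,v]\le[H,v_0]$ for each $v$. Second, the induction you sketch to reach your target is circular in the context of the paper's architecture: your base case $J=\emptyset$, ``force $w(A_1\cap\tilde M,\dots,A_k\cap\tilde M)$ into $C$,'' is exactly the $s=0$ step of Proposition~\ref{pavels-lemma}, which the paper obtains by \emph{applying Lemma~\ref{basic-light}} (with $M=G$, $a_i=1$) followed by Lemma~\ref{uno2}. Lemmas~\ref{uno2} and~\ref{M2} belong to the layer above this lemma (they drive Proposition~\ref{inductive-step}); they cannot supply its content.

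The missing idea is an extremal argument. Choose $\tilde a_i\in a_i(A_i\cap M)$ so that $a=w(\tilde a_1,\dots,\tilde a_k)$ has the \emph{maximal} number of $H$-conjugates among all values $w(a_1u_1,\dots,a_ku_k)$ with $u_i\in A_i\cap M$; by Lemma~\ref{2.1g} realize $a^H=\{a^{b_1},\dots,a^{b_r}\}$ with $l_B(b_i)\le n-1$, and set $\tilde M=M\cap\bigcap_{g\in G}C_G(\langle b_1,\dots,b_r\rangle)^g$, which has $(j,n)$-bounded index. Any generator of $W$ then has the form $va$ with $v\in\tilde M$ centralizing the $b_i$, so the $r$ distinct elements $va^{b_i}=(va)^{b_i}$ already exhaust $(va)^H$ by maximality; comparing $(va)^h=va^b$ coordinatewise gives $[h,v]^a=[a,h][b,a]\in[H,a]$, hence $[H,va]=[H,a][H,v]^a\le[H,a]$. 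This bounds $[H,W]^G$ by $[H,a]^G$ and Lemma~\ref{2.3b} finishes — with no need for any congruence between the perturbed values, and no induction over subsets inside this lemma.
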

\begin{proof}
Consider the set 
$$S=\{w( a_1u_1,\dots,a_ku_k)  \,:\,  u_i\in A_i\cap M\textrm{ for every } i \}.$$ 
Choose  $\tilde a_i\in a_i(M\cap A_i)$, for every $i$, such that   $a=w(\tilde a_1,\dots,\tilde a_k)$ has the maximum number of conjugates  in $H$ among all the elements of $S$, that is  
$|a^H|\ge |g^H|$ for all $g\in S$.
 
By Lemma \ref{2.1g}  we can choose $b_1,\dots, b_r\in H$ such that $l_B(b_i) \le n-1$ and $a^H = \{a^{b_i}  \,:\,  i = 1, \dots, r\}$. Let $\tilde M$ be the intersection of $M$ and all conjugates of $C_G (\langle b_1 ,\dots, b_r \rangle)$. Since $l_B(b_i) \le n-1$ and $C_G(x)$ has index at most $n$ in $G$ for each  $x \in B$, the subgroup  $C_G(\langle b_1, \dots , b_r\rangle)$ has $n$-bounded index in $G$, and so $\tilde M$ has $(j,n)$-bounded index. 

Consider the element  $w(\tilde a_1v_1,\dots,\tilde a_kv_k)\in S$ where $v_i\in (A_i\cap\tilde M)$, for $i=1,\dots,k$.  We have 
$$w(\tilde a_1v_1,\dots,\tilde a_kv_k)=va,$$ for some $v\in \tilde M\le  C_G ( b_1 ,\dots, b_r )$. 
It follows that $(va)^{b_i} = va^{b_i}$ for each $i =1,\dots, r$. Therefore the elements $va^{b_i}$ form the conjugacy class $(va)^H$ because they are all different and their number is the allowed maximum. So, for an arbitrary element $h\in H$ there exists $b\in\{b_1 ,\dots, b_r\}$ such that
$(va)^h= va^b$ and hence $v^h a^h = va^b$. Therefore $[h, v] = v^{-h}v=a^h a^{-b}$ and so $[h, v]^a =a^{-1} a^h a^{-b} a = [a,h][b,a] \in [H,a].$
 Thus $[H,v]^a \le [H,a]$ and $$[H, va]=[H,a] [H,v]^a \le [H, a].$$ 
This shows that $[H,w(\tilde a_1 (A_1\cap \tilde M),\dots,\tilde a_k (A_k\cap \tilde M))]\le [H,a]$.
Lemma \ref{2.3b} states that $[H,a]^G$ has $n$-bounded order. The result follows.
\end{proof}

For the reader's convenience, the most technical part of our argument is isolated in the following proposition. 

\begin{proposition}\label{inductive-step} 
Let $I\subseteq\{1,\dots,k\}$ with $|I|=s$. Under the hypotheses of Proposition \ref{pavels-lemma}, assume that $G$ contains a normal subgroup $R$ of finite order $r$ and  
a normal subgroup $M$ of finite index $j$ such that
\[[H, w_J (A_i; A_i\cap M)]\le R \quad \textrm{for every}\ J \subsetneq I.\]
Then there exists a finite normal subgroup $R_I$ of $G$ of $(r,j,n,s)$-bounded order with
$R\le R_I$ and  
a normal subgroup $M_I$ of $G$ of $(j,n,s)$-bounded index with $M_I\le M$ such that
\[ [H,w_I (A_i; A_i\cap M_I)]\le R_I.\]
\end{proposition}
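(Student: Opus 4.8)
The strategy is a one-variable-at-a-time argument modelled on the proof of Lemma \ref{basic-light}: the hypothesis on the words $w_J$ with $J\subsetneq I$ is used to trim the coordinates lying in $I$ down to boundedly many representatives, while the containment $Y\subseteq B$ — which guarantees that Lemma \ref{2.3b} applies to all the $w$-values we meet — is used to handle the coordinates outside $I$. Throughout I would keep $R$ and argue modulo it; under this reduction the hypothesis says precisely that each $w_J(A_i;A_i\cap M)$ with $J\subsetneq I$ centralises $H$ modulo $R$.

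First I would dispose of the coordinates in $I$. Expanding $w$ one variable at a time via Lemma \ref{2.1-conjugates} — equivalently, by the computation behind Lemmas \ref{uno2} and \ref{M2} — replacing a base point $a_i$ (for $i\in I$) by any other representative $\alpha_i$ of the coset $a_i(A_i\cap M)$ changes $w_I(a_i;v_r)$ only by a factor $c$ lying in the subgroup generated by the relevant $w_J$-values, so that $[H,c]\le R$; hence $[h,w_I(a_i;v_r)]\equiv[h,w_I(\alpha_i;v_r)]\pmod R$ for every $h\in H$. As $[G:M]=j$, there are at most $j^{s}$ coset-tuples $(\alpha_i)_{i\in I}$, so it suffices to produce, for each fixed such tuple, a normal subgroup $M^{(\alpha)}\le M$ of $(j,n,s)$-bounded index and a finite normal subgroup of $(r,j,n,s)$-bounded order containing $[H,w_I(\alpha_i;A_r\cap M^{(\alpha)})]$; intersecting the $M^{(\alpha)}$ and multiplying the finite subgroups together with $R$ then yields $M_I$ and $R_I$.

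Fix a tuple $(\alpha_i)_{i\in I}$. Every $w_I(\alpha_i;v_r)$ with $v_r\in A_r$ is a $w$-value with all arguments in the $A_\bullet$, hence lies in $Y\subseteq B$, so $[H,w_I(\alpha_i;v_r)]^G$ has $n$-bounded order by Lemma \ref{2.3b}. With the $I$-coordinates frozen I would run the maximality argument of Lemma \ref{basic-light}: among all $a=w_I(\alpha_i;v_r)$ with $v_r\in A_r\cap M$ pick one with the largest number of conjugates in $H$, use Lemma \ref{2.1g} to choose $b_1,\dots,b_p\in H$ with $l_B(b_t)\le n-1$ realising $a^H$, and set $M^{*}:=M\cap\bigcap_{g\in G}C_G(\langle b_1,\dots,b_p\rangle)^{g}$, which is normal in $G$ of $(j,n)$-bounded index. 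Shifting each $v_r$ by an element of $A_r\cap M^{*}$ multiplies $w_I(\alpha_i;v_r)$ on the left by an element of $M^{*}\le C_G(\langle b_t\rangle)$, and maximality forces, exactly as in Lemma \ref{basic-light}, that $[H,w_I(\alpha_i;v_r^{0}v_r)]\le[H,a]$ for all $v_r\in A_r\cap M^{*}$, where $v_r^{0}$ realises $a$. This covers only the coset $v_r^{0}(A_r\cap M^{*})$; to push it up to the whole subgroup $A_r\cap M_I$ I would iterate, arguing by downward induction on the largest conjugacy-class size $m\le n$ occurring among the $w_I(\alpha_i;v_r)$ with $v_r$ ranging over $A_r\cap M'$ for the current subgroup $M'$ (classes computed modulo the finite normal subgroup accumulated so far): each stage replaces $M'$ by $M^{*}$ and enlarges the finite subgroup by $[H,a]^{G}$, decreasing $m$ by at least one, so after at most $n$ stages $m=1$ and $w_I(\alpha_i;v_r)$ centralises the relevant quotient of $H$ for every $v_r$ in the final subgroup.

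The step I expect to be the main obstacle is this last one: converting the single-coset statement into a whole-subgroup statement at $(j,n,s)$-bounded cost. The difficulty is that two elements $v_r^{0},v_r$ of $A_r\cap M$ differ in general by an element \emph{outside} $M^{*}$, so the conjugacy-class data attached to $w_I(\alpha_i;v_r^{0})$ does not transport directly to $w_I(\alpha_i;v_r)$; one has to choose $M^{*}$ and the maximising value carefully — again invoking the multilinear identities of Lemmas \ref{uno2} and \ref{M2} and, crucially, the hypothesis on $w_J$ for $J\subsetneq I$ — so that the downward induction on $m$ is genuinely well-founded and halts after a bounded number of steps. That is the technical heart of Proposition \ref{inductive-step}.
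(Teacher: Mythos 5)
Your overall architecture matches the paper's: enumerate the at most $j^s$ coset-tuples of the $I$-coordinates modulo $M$, run the maximality argument of Lemma \ref{basic-light} for each tuple, keep track of a growing finite normal subgroup and a shrinking bounded-index normal subgroup, and use the hypothesis on $w_J$ for $J\subsetneq I$ (via Lemma \ref{M2}) to move the $I$-coordinates within their $M$-cosets. But the step you yourself flag as ``the main obstacle'' is a genuine gap, and the fix you sketch does not work. The difficulty is exactly where you locate it: freezing the $I$-coordinates and maximising only over the coordinates outside $I$ controls $[H,w_I(\alpha_i;v_r)]$ only for $v_r$ ranging over the single coset $v_r^{0}(A_r\cap M^{*})$, whereas the conclusion needs $v_r$ to range over all of $A_r\cap M_I$. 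Your downward induction on the maximal conjugacy-class size $m$ is not well-founded: replacing $M'$ by $M^{*}$ and enlarging the accumulated finite normal subgroup by $[H,a]^G$ kills the class of the maximising element $a$ (and of its coset), but gives no control over the class sizes of $w_I(\alpha_i;v_r)$ for $v_r$ in the other cosets of $M^{*}$ inside $M'$; there is no reason for $m$ to drop at each stage, so the iteration need not terminate after boundedly many steps.

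The paper closes this gap without any iteration. It applies Lemma \ref{basic-light} with all $k$ coordinates varying simultaneously (base points $c_i$ for $i\in I$ and $1$ for $i\notin I$), obtaining representatives $d_i\in c_i(A_i\cap M)$ and a subgroup $M_{\underline{c}}$ with $[H,w(d_1(A_1\cap M_{\underline{c}}),\dots,d_k(A_k\cap M_{\underline{c}}))]^G$ of bounded order. The coset-to-subgroup conversion for the coordinates outside $I$ is then done in one stroke by Lemma \ref{uno2}: once $\overline{w(d_1(A_1\cap M_I),\dots,d_k(A_k\cap M_I))}=1$ in the central quotient $\bar G=G/Z$, that lemma yields $\overline{w_I(d_i(A_i\cap M_I);A_i\cap M_I)}=1$, i.e. the non-$I$ coordinates are freed from their cosets purely by the multilinear identities, with no further shrinking of $M$. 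Only afterwards is Lemma \ref{M2} (with the hypothesis on $w_J$, $J\subsetneq I$) used to replace $d_i$ by an arbitrary $a_i=d_iv_i$ with $v_i\in A_i\cap M$. You cite Lemma \ref{uno2} only in passing and only for the $I$-coordinates; its real role is precisely the step you could not complete. With that substitution your argument becomes the paper's proof.
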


\begin{proof}
 For every $i=1,\dots, k$ choose a left transversal $C_i$ of  $A_i\cap M$ in $A_i$. Observe that $|C_i| \le j$. 
  Let $\Omega$ be the set of $k$-tuples  $\underline{c}=(c_1, \dots , c_k)$ where $c_i \in C_i$ if $i\in I$ and $c_i=1$ otherwise. 
   Notice that the size of $\Omega$ is at most $j^s$. 
 For any  $k$-tuple  $\underline{c}=(c_1, \dots , c_k) \in \Omega$,  by  Lemma \ref{basic-light}, 
 there exist elements $d_i\in  c_i (A_i\cap M)$, with
 $i=1,\dots,k$, and a normal subgroup $M_{\underline c}$ of $(j,n)$-bounded index in $G$ such that the order of 
$$[H,w(d_1  (A_1\cap M_{\underline c}),\dots,d_k (A_k\cap M_{\underline c}))]^G$$
 is $n$-bounded.
Let 
\begin{eqnarray*}
M_I&=&M \cap \bigg( \bigcap_{\underline{c}\in \Omega}M_{\underline c}\bigg),\\
R_I&=& R \, \prod_{\underline{c}\in \Omega}[H,w(d_1(A_1\cap M_{\underline c}),\dots,d_k (A_k\cap M_{\underline c}))]^G.
\end{eqnarray*}
 As $|\Omega|\le j^s$,
 it follows that 
 $M_I$ has $(j,n,s)$-bounded index in $G$ and $R_I$ has $(r,j,n,s)$-bounded order.

Let $Z/R_I$ be the center of $HR_I/R_I$ in the quotient group $G/R_I$ and let $\bar G=G/Z$. The image of a subgroup
$U$ of $G$ in $\bar G$ will be denoted by $\bar U$, and similarly for the image of an element.

Pick an arbitrary element $w_I(a_i,h_i)\in w_I (A_i; A_i\cap M_i)$ (where $a_i \in A_i$ for $i\in I$,  and $h_i \in A_i\cap M_I$ otherwise).
Define the $k$-tuple  $\underline{c}=(c_1, \dots , c_k) \in \Omega$  by $a_i\in c_{i}(A_i\cap M)$ if $i\in I$ and $c_i=1$ otherwise. 
 Let $d_1,\dots,d_k$ be the elements as above, corresponding to the  $k$-tuple  $\underline{c}$. 
 Then 
$$[H,w(d_1(A_1\cap M_I),\dots,d_k (A_k \cap M_I))]\le R_I,$$
 that is 
$$\overline{w(d_1(A_1\cap  M_I),\dots,d_k(A_k\cap M_I))}=1,$$
 in the quotient group $\bar G=G/Z$. 
By Lemma \ref{uno2}, we deduce that 
\begin{equation}\label{step}
 \overline{w_I(d_i (A_i\cap M_I);A_i\cap M_i)}=1. 
\end{equation}
Moreover, as $c_{i}(A_i\cap M)=d_i(A_i\cap M)$, we have that $a_i=d_iv_i$ for some $v_i\in A_i\cap M$.
It also follows from our assumptions that  
$$\overline{w_J (A_i; A_i\cap M)}=1$$
for every proper subset $J$ of $I$. Thus we can apply Lemma \ref{M2} obtaining that 
$$w_I (\overline a_i;\overline h_i)=w_I (\overline d_i\overline v_i;\overline h_i)=
w_I (\overline d_i;\overline h_i)=1,$$
where in the last equality we have used (\ref{step}).
Since $w_I(a_i,h_i)$ was an arbitrary element of $ w_I (A_i; A_i\cap M_i)$, 
 it follows that $$\overline{w_I (A_i; A_i\cap M_i)}=1,$$
  that is \[ [H, w(A_i; A_i\cap M_i)]\le R_I,\]
as desired.
\end{proof}

Now the proof of Proposition \ref{pavels-lemma}  is an easy induction.

\begin{proof}[Proof of Proposition \ref{pavels-lemma}.] 
We will prove that for every $s=0,\dots,k$  the group $G$ contains a finite normal subgroup $R_s$ of $(n,k)$-bounded order and a normal subgroup $M_s$ of $(n,k)$-bounded index such that
\[ [H,w_I (A_i; A_i\cap M_s)]\le R_s\]
for every subset $I$ of $\{1,\dots,k\}$ with $|I|\le s$.
Once this is done, the theorem will follow taking $s=k$.

Assume that $s=0$. 
We apply Lemma \ref{basic-light} with $M=G$ and $a_i=1$ for every $i=1,\dots,k$. 
Thus  there exist elements $a_i\in A_i$ for each $i=1,\dots,k$ and  a normal subgroup of $n$-bounded index  $M_0$ of $G$, such that the order of 
$$R_0=[H,w( a_1(A_1\cap  M_0),\dots , a_k (A_k\cap M_0))]^G$$
 is $n$-bounded.

Let $Z/R_0$ be the center of $HR_0/R_0$ in the quotient group $G/R_0$ and let $\bar G=G/Z$.
Since 
$$\overline{w( a_1 (A_1\cap  M_0) ,\dots , a_k (A_k\cap  M_0))}=1,$$
it follows from Lemma \ref{uno2} that
$$\overline{w(A_1\cap  M_0,\dots ,A_k\cap  M_0)}=1,$$
 that is, $[H,w(A_1\cap  M_0,\dots ,A_k\cap  M_0)]\le R_0$. This proves the result in the case $s=0$.

Now assume  $s\ge 1$. Choose
$I\subseteq\{1,\dots,k\}$ with $|I|=s$. By induction, the hypotheses of Proposition \ref{inductive-step} are satisfied with $R=R_{s-1}$
and $M=M_{s-1}$, so there exists a finite normal subgroup $R_I$ of $G$ of $(n,k)$-bounded order with
$R_{s-1}\le R_I$ and  
a normal subgroup $M_I$ of $G$ of $(n,k)$-bounded index with $M_I\le M_{s-1}$ such that
\[ [H,w_I (A_i; (A_i\cap M_I)]\le R_I.\]

Let
 $$M_s=\bigcap_{|I|=s}M_I, \quad R_s=\prod_{|I|=s}R_I,$$
  where the intersection (resp. the product) ranges over all subsets $I$ of $\{1,\dots,k\}$ of size $s$.

As there is a $k$-bounded number of choices for $I$, it follows that $R_s$ (resp. $M_s$) has  $(n,k)$-bounded order (resp. $(n,k)$-bounded index 
in $G$).  Note that $M_s\le M_{s-1}$ and $R_{s-1}\le R_s$.
Therefore
\[ [H,w_I (A_i; A_i\cap M_s)]\le R_s\] 
 for every $I\subseteq\{1,\dots,k\}$ with $|I|\le s$. 
This completes the induction step. The proposition is established.
\end{proof}

\section{Proof of Theorem \ref{soluble-main}} 

Throughout this section we will assume that $w=w(x_1,\dots,x_k)$ is a multilinear commutator word and the group $G$ satisfies the  $(n,s)$-covering condition:
\begin{equation}\label{eq:cover}G_w\subseteq BS,\end{equation}
where 
 $B = B_n(G)= \{x \in G  \,:\,  |x^G | \le n\}$, and
$S$ is a subset of $G$ of size $s$. We will show that $G$ has a soluble subgroup $\T$ such that $[G : \T]$ and the derived length of $\T$ are both 
$(k,n,s)$-bounded. Actually, $T$ can be chosen of derived length at most $2k+1$. 

We will use the following key property of multilinear commutators.

\begin{lemma}\label{lem:deltak} Let $w=w(x_1,\dots,x_k)$ be a multilinear commutator and let 
$\delta_{k-1}$  
 be the $(k-1)$-th derived word. Then  $G_{\delta_{k-1}}\subseteq G_w$ for every group $G$.
\end{lemma}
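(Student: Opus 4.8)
The plan is to prove Lemma~\ref{lem:deltak} by induction on $k$, tracking the structure of the multilinear commutator word $w$ through its recursive definition. For $k=1$ we have $w(x)=x=\delta_0(x)$, so $G_{\delta_0}=G_{\delta_{-1}}$... more precisely the case $k=1$ is trivial since $\delta_0=w$. For the inductive step, write $w=[u,v]$ where $u=u(x_1,\dots,x_m)$ and $v=v(x_{m+1},\dots,x_k)$ are multilinear commutators on $m$ and $k-m$ variables respectively, with $m+(k-m)=k$ and $1\le m\le k-1$.

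The key observations I would assemble are: (i) any $w$-value can be specialized so that it becomes a commutator $[a,b]$ where $a$ ranges over $G_u$ and $b$ over $G_v$ (taking the other variables freely); (ii) by induction, $G_{\delta_{m-1}}\subseteq G_u$ and $G_{\delta_{k-m-1}}\subseteq G_v$; and (iii) the derived word satisfies the monotonicity $G_{\delta_{i}}\subseteq G_{\delta_{j}}$ whenever $i\ge j$, because $\delta_{i+1}=[\delta_i,\delta_i]$ is a commutator of two $\delta_i$-values and hence already a $\delta_i$-value (take the right argument trivial, using that $[\delta_i(\vec x),1]=1$ is a $\delta_i$-value — or better, note every $\delta_i$-value $g$ equals $[\delta_i(g,1,\dots),\dots]$; the cleanest phrasing is that $G_{\delta_{i+1}}\subseteq G_{\delta_i}$ since a $\delta_{i+1}$-value is a commutator of $\delta_i$-values, and commutators of $H$-values, for $H=\langle G_{\delta_i}\rangle$, lie in... ). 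Actually the cleanest route: since $\delta_{i}$ is itself a multilinear commutator on $2^i$ variables, and $\delta_{i-1}$ appears as a ``sub-bracket,'' one gets $G_{\delta_i}\subseteq G_{\delta_{i-1}}$ directly from claim (i) applied to $w=\delta_i$. So WLOG I may replace $\delta_{m-1}$ and $\delta_{k-m-1}$ by $\delta_{r}$ for $r=\max\{m-1,k-m-1\}$, and then both $G_u$ and $G_v$ contain $G_{\delta_r}$.

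Now take an arbitrary $\delta_{k-1}$-value $t$. I want to exhibit $t$ as a $w$-value. Write $t=[\delta_{k-2}(\vec y),\delta_{k-2}(\vec z)]$ for suitable tuples $\vec y,\vec z$. Since $r=\max\{m-1,k-m-1\}\le k-2$ (as $m\ge1$ and $k-m\ge1$ force both $m-1\le k-2$ and $k-m-1\le k-2$), we have $G_{\delta_{k-2}}\subseteq G_{\delta_r}\subseteq G_u\cap G_v$ by the monotonicity and induction hypotheses. Hence $\delta_{k-2}(\vec y)\in G_u$ and $\delta_{k-2}(\vec z)\in G_v$, so $t=[\delta_{k-2}(\vec y),\delta_{k-2}(\vec z)]$ is a value of $[u,v]=w$. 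This gives $G_{\delta_{k-1}}\subseteq G_w$, completing the induction.

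The main obstacle is bookkeeping rather than conceptual depth: one must be careful that ``a $u$-value'' may require the $m$ variables of $u$ to be chosen from all of $G$ (not from a proper subset), which is fine here, and that the monotonicity $G_{\delta_i}\subseteq G_{\delta_{i-1}}$ is applied in the right direction — larger index gives a \emph{smaller} value set. A secondary subtlety is handling the base of the induction on $u$ and $v$ when $m=1$ (then $u=x_1$, $G_u=G$, and $\delta_{m-1}=\delta_0$, so trivially $G_{\delta_0}=G=G_u$), which the argument above already covers. I would present the monotonicity of the $\delta_i$ value sets as a short separate observation, then do the two-line induction.
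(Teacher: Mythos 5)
Your proof is correct and follows essentially the same route as the paper's: induct on $k$, decompose $w=[u,v]$, and observe that every $\delta_{k-2}$-value is both a $u$-value and a $v$-value, so that a $\delta_{k-1}$-value $[\delta_{k-2}(\vec y),\delta_{k-2}(\vec z)]$ is a $w$-value. The only difference is that you isolate the monotonicity $G_{\delta_i}\subseteq G_{\delta_{i-1}}$ as a separate observation (which itself deserves its own short induction on $i$ rather than the somewhat garbled justifications you sketch), whereas the paper absorbs this step into the phrase ``in particular, every $\delta_{k-2}$-value is a $v_i$-value.''
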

\begin{proof} The proof is by induction on $k$. If $k=1$ then $w=\delta_0=x$ and the result is true. So now assume that $k\ge 2$.
Then $w=[v_1,v_2]$ where $v_1,v_2$ are multilinear commutators on $k_1$ and $k_2$ variables respectively, 
 where clearly $k_1, k_2 \le k-1$. By induction,  every $\delta_{k_i-1}$-value is a $v_i$-value. In particular, every $\delta_{k-2}$-value is a $v_i$-value, for $i=1,2$, 
  and the result follows. 
\end{proof}

At some point, without loss of generality, we will be able to assume that $G$ satisfies the following ``stability condition".
 \begin{equation}\label{stab1}\text{Any $w$-value $x$ such that  $|x^G | \le n^{2^k}$ is contained in $B$.} \end{equation}
 
\begin{lemma}\label{step1}  Suppose $G$ satisfies the condition \eqref{stab1}. Let $H= \langle B\rangle$. Then  
  $w(H, G, \dots, G)$ has derived subgroup of finite $(n, k)$-bounded order.
\end{lemma}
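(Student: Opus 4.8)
The plan is to reduce the statement to a clean application of Proposition \ref{pavels-lemma}. Set $A_1=H$ and $A_2=\dots=A_k=G$; these are not normal in $G$ in general, so the first task is to replace $G$ by a suitable finite-index normal subgroup on which everything becomes normal and the relevant $w$-values are genuinely BFC. Concretely, I would first use the covering condition \eqref{eq:cover} together with Lemma \ref{lem:deltak} (so that $\delta_{k-1}$-values are $w$-values, hence covered by $BS$) and a standard counting/pigeonhole argument — of the type used in \cite{dms}, \cite{S-BFC} — to find a normal subgroup $N$ of $G$ of $(k,n,s)$-bounded index such that all $w$-values lying in $N$ have $G$-conjugacy class of size at most $n^{2^k}$, and then invoke the stability condition \eqref{stab1} to conclude those values lie in $B$. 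This is what lets us pass from the covering hypothesis to an honest hypothesis ``$Y\subseteq B$'' of Proposition \ref{pavels-lemma}.

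The second step is to feed this into Proposition \ref{pavels-lemma} with the right choice of normal subgroups. Since $H=\langle B\rangle$ is normal in $G$ (being generated by a union of conjugacy classes), I would take $A_1 = H \cap N$ and $A_2 = \dots = A_k = N$, all normal in $G$ of bounded index, and consider $Y = \{\,w(g_1,\dots,g_k) : g_1\in A_1,\ g_i\in A_i\,\}$. By the first step, $Y\subseteq B$. Proposition \ref{pavels-lemma} then gives that $[H,\langle Y\rangle]$ has finite $(n,k)$-bounded order. Now $\langle Y\rangle$ is, up to bounded index, the subgroup $w(H,G,\dots,G)$ we care about: more precisely $w(H\cap N, N,\dots,N)$ has $(k,n,s)$-bounded index in $w(H,G,\dots,G)$, and $[H, \langle Y\rangle]$ contains the commutator subgroup of $\langle Y\rangle$ up to the same bounded-order correction, because $\langle Y\rangle \le H$ (every $w$-value with first argument in $H\le B_n$-generated part is itself in $H$, indeed $Y\subseteq B \subseteq H$), so $\langle Y\rangle' = [\langle Y\rangle,\langle Y\rangle] \le [H,\langle Y\rangle]$.

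The final bookkeeping step is to upgrade ``$w(H\cap N,N,\dots,N)'$ has bounded order'' to ``$w(H,G,\dots,G)'$ has bounded order''. For this I would use that $w(H\cap N, N, \dots, N)$ is normal of $(k,n,s)$-bounded index in $w(H,G,\dots,G)$, and that a group with a normal subgroup of bounded index whose derived subgroup has bounded order is itself bounded-by-abelian-by-bounded; combined with B.\,H.~Neumann's theorem \cite{bhn} (a bounded-index subgroup argument plus the fact that the whole group $w(H,G,\dots,G)$ is generated by boundedly many cosets) this forces $w(H,G,\dots,G)'$ to have $(n,k)$-bounded order. I expect the main obstacle to be the first step — the transition from the covering condition to the BFC condition $Y\subseteq B$ — since this is where the parameter $s$ enters and where one must control, via a pigeonhole over the $s$ cosets $Bt$ ($t\in S$), a normal subgroup on which $\delta_{k-1}$-values (and hence enough $w$-values) have boundedly small conjugacy classes; the rest is essentially a translation of Proposition \ref{pavels-lemma} plus routine index/order arithmetic.
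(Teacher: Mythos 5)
There is a genuine gap, and it sits exactly where you predicted the main obstacle would be, but the obstacle is not the one you identified. The heart of the paper's proof is to show that \emph{every} value $w(h,g_2,\dots,g_k)$ with $h\in H$ and $g_2,\dots,g_k\in G$ arbitrary already lies in $B$, so that Proposition~\ref{pavels-lemma} can be applied directly with $A_1=H$ and $A_2=\dots=A_k=G$ (all normal in $G$, since $B$ is a union of conjugacy classes), giving $\langle Y\rangle=w(H,G,\dots,G)$ with no index corrections. The mechanism is an induction on the length $l_B(h)$: in the base case $h\in B$ the value $w(h,g_2,\dots,g_k)$ is a product of $2^{k-1}$ conjugates of $h^{\pm1}$, hence has at most $n^{2^{k-1}}$ conjugates; in the inductive step one writes $h=h_1h_2$ with shorter lengths and uses Lemma~\ref{2.1-conjugates} to split $w(h_1h_2,g_2,\dots,g_k)$ as a product of two $w$-values that lie in $B$ by minimality, hence has at most $n^2$ conjugates. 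In both cases the stability condition \eqref{stab1} then forces the value back into $B$. Your proposal contains no trace of this length induction, and your substitute for it does not work: there is no reason a bounded-index normal subgroup $N$ exists in which all $w$-values have class at most $n^{2^k}$. The $w$-values lying in the cosets $Bt$ with $t\in S\setminus\{1\}$ (which under \eqref{stab1} have classes larger than $n^{2^k}$) need not avoid any bounded-index normal subgroup, and no pigeonhole over the $s$ cosets produces one. Note also that this whole first step of yours is aimed at re-deriving the stability condition, which is already a \emph{hypothesis} of the lemma (the reduction from the covering condition to \eqref{stab1} is carried out later, in the proof of Theorem~\ref{soluble-main}, by induction on $s$, absorbing a bad element of $S$ into an enlarged $B$ --- not by passing to a subgroup).

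Your final bookkeeping step is also unsound as stated: it is not true that a group with a normal subgroup of bounded index whose derived subgroup has bounded order must itself have derived subgroup of bounded order (the infinite dihedral group has an abelian subgroup of index $2$ but infinite derived subgroup), and B.~H.~Neumann's BFC theorem does not apply to $w(H,G,\dots,G)$ absent further information. This step only becomes necessary because of the detour through $N$; following the paper's route, one gets $Y\subseteq B\subseteq H$ directly, so $\langle Y\rangle\le H$, $\langle Y\rangle'\le[H,\langle Y\rangle]$, and Proposition~\ref{pavels-lemma} finishes the proof with no upgrade needed. The part of your proposal that is correct --- and does match the paper --- is the final reduction to Proposition~\ref{pavels-lemma} via $\langle Y\rangle'\le[H,\langle Y\rangle]$; what is missing is the actual argument that puts $Y$ inside $B$.
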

\begin{proof}
For any element $h \in H$, let $l_{B}(h)$ be the minimal number $j$ such that $h$ can be written as a product of $j$ elements from $B$. Clearly $|h^G| \le n^{l_{B}(h)}$.   

Assume there is an element $h\in H$ such that $w(h, g_2, \dots , g_k) \notin B$ for some $g_i\in G$ and choose $h$ in such a way that $l_{B}(h)$ is minimal. Note that, if  $h \in B$, 
then the element $ w(h, g_2, \dots , g_k)$ can be written as a product of $2^{k-1}$ conjugates of the elements $h$, $h^{-1}$, whence,  
by the stability condition \eqref{stab1}, $ w(h, g_2, \dots , g_k) \in B$, a contradiction. 

 Thus, $l_{B}(h) \ge 2$ and  we can write $h=h_1 h_2$ where $l_{B}(h_1), l_{B}(h_2) < l_{B}(h)$. By Lemma \ref{2.1-conjugates} 
  there exists $y_1 \in h_1^G$ and 
 $y_j \in g_j^G$,  for  $j=2,\dots, k$, such that 
\[  w(h_1h_2, g_2, \dots , g_k)= w(y_1,y_2, \dots , y_k) w(h_2, g_2,  \dots , g_k).
\]
By minimality, both factors on the right hand side of the above equality belong to $B$. So $w(h, g_2, \dots , g_k)$ has at most $n^2$ conjugates, and therefore, by the stability condition \eqref{stab1}, it belongs to $B$, a contradiction. 
  
  This proves that for every $h \in H$ and every $g_i \in G$ the element  $w(h, g_2, \dots , g_k) $ belongs to $B$. So we can apply Proposition \ref{pavels-lemma} 
   to deduce that $w(H, G, \dots, G)'$ has finite  $(n, k)$-bounded order. 
\end{proof}

\begin{proof}[Proof of Theorem \ref{soluble-main}]
Note that it is sufficient to prove that $G$ has a subgroup $\tilde T$ such that $[G : \tilde T]$ and $|\tilde T^{(2k)} |$ are both $(k,n,s)$-bounded, since then 
 $C_{\tilde T}(\tilde T^{(2k)})$ will be a soluble subgroup of derived length at most $2k+1$ and $(k,n,s)$-bounded index in $G$. 

Without loss of generality we may assume that $1\in S$. In what follows we argue by induction on $s$. 
If $s=1$, then $G_w\subseteq B$ and it follows from Theorem 1.2 in \cite{dms} that $w(G)'$ has finite $(k,n)$-bounded order. Since $G^{(k-1)}\le w(G)$ by Lemma \ref{lem:deltak}, we conclude that $G^{(k)}$ has finite $(k,n)$-bounded order. 

So assume $s\ge 2$ and let $A= 2^k$. 
Suppose there is some commutator $x\in G_w$ such
that $n < |x^G | \le n^{A}$. By the covering condition $x\in Ba$ for some nontrivial $a \in S$.
This implies that $|a^G |\le n^{A+1}$. Then for any $g \in Ba$ we have $|g^G |\le n^{A+2}$. Hence we
can remove $a$ from $S$ at the cost of increasing $n$ to $n^{A+2}$ and then apply induction
on $s$. Hence we may assume $B$ satisfies the  stability condition  \eqref{stab1}:
Any $w$-value $x$ such that  $|x^G | \le n^{2^k}$ is contained in $B$.

 Let  $H = \langle B\rangle$. 
 It follows from  Lemma \ref{step1} that  $w(H, G, \dots, G)$ has commutator subgroup of finite 
  $(k,n,s)$-bounded order. Note that $H^{(k-1)} \le w(H)$ by Lemma \ref{lem:deltak}.
Hence, $H^{(k)} \le w(H)' \le w(H, G, \dots, G)'$ and we deduce that $H^{(k)}$ has  finite $(k, n, s)$-bounded order as well.  
  
Note that $H$ is a normal subgroup of $G$. Taking into account the covering condition observe that $w$ takes at most $s$ values in $G/H$. As $w$ is a boundedly concise word  (see \cite{fernandez-morigi}), it follows that the verbal subgroup $w(G/H)$ has $(s,k)$-bounded order. So the centralizer $C/H=C_{G/H}(w(G/H))$ has $(s,k)$-bounded index. Moreover $G^{(k-1)}\le w(G)$ by Lemma \ref{lem:deltak}, whence  $C^{(k)}\le H$. 
 Therefore $C^{(2k)} \le H^{(k)}$ has  finite $(k, n, s)$-bounded order. This completes the proof. 
\end{proof}


\section{Proof of Theorem \ref{cov-nilp}}  

The goal of this section is to furnish a proof of Theorem \ref{cov-nilp}. We start with some general facts that will be used in the proof.

If $K$ is a subgroup of a finite group $G$, the commuting probability $\pr (K,G)$ of $K$ in $G$ is the probability that a random element of $K$ commutes with a random element of $G$. Thus, 
\[ \pr(K,G) =\frac{ | \{ (x,y) \in K\times G \,:\, xy=yx \} |}{|K|\,|G|}.\]
 Note that if $|x^G|\leq n$ for every $x\in K$, then $\pr (K,G)\geq {1}/{n}$. It was shown in \cite{DS}
  that if $\pr (K,G)\geq\epsilon>0$, then there is a normal subgroup $T\leq G$ and a subgroup $B\leq K$ such that the indices $[G:T]$ and $[K:B]$ and the order of the commutator subgroup $[T,B]$ are $\epsilon$-bounded. 
  Moreover, if $K$ is normal in $G$, then we can assume that $B$ is normal in $G$ and $K \cap T \le Z_3(T)$ (see \cite[Remark 2.7]{DS}). 

We recall the following well-known criterion for nilpotency of a group. The result is due to P. Hall \cite{hall1}.
\begin{theorem}\label{hall}
Let $N$ be a nilpotent normal subgroup of a group $G$ such that $N$ and $G/N'$ are both nilpotent. Then $G$ is nilpotent of class bounded in terms of the classes of $N$ and $G/N'$.
\end{theorem}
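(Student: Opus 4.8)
The plan is to reduce the statement to the case where $N$ is abelian, and then exploit the action of $G$ on $N$ by conjugation together with the hypothesis that $G/N'$ is nilpotent. The key observation is that if $N$ is abelian, then $G$ acting on $N$ factors through $G/C_G(N)$, and since $N \le C_G(N)$ we have that $G/C_G(N)$ is a quotient of $G/N = G/N'$, hence nilpotent of bounded class, say $c_2$. So $[N, \underbrace{G, \dots, G}_{c_2}] = 1$, which says $N \le Z_{c_2}(G)$. Combined with $G/N$ being nilpotent of class $c_2$ (since $N' = 1$), one gets that $G$ is nilpotent: the upper central series of $G$ reaches $N$ within $c_2$ steps going down from $G$, and reaches the top within $c_2$ steps from the bottom, so $G$ is nilpotent of class at most $2c_2$, or more carefully of class at most $c_1 + c_2$ type bounds. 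Actually the clean statement: if $N \le Z_r(G)$ and $G/N$ is nilpotent of class $c$, then $G$ is nilpotent of class at most $r + c$. This handles the abelian case with an explicit bound.

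For the general case, I would induct on the nilpotency class $c_1$ of $N$. Let $Z = \gamma_{c_1}(N)$, the last nontrivial term of the lower central series of $N$; it is abelian, central in $N$, and normal (indeed characteristic) in $G$. Pass to $\bar G = G/Z$. Then $\bar N = N/Z$ is nilpotent of class $c_1 - 1$, and $\bar N' = N'Z/Z$, so $\bar G / \bar N' = G / N'Z$ is a quotient of $G/N'$, hence nilpotent of class at most $c_2$. By the inductive hypothesis, $\bar G = G/Z$ is nilpotent of some bounded class $d = d(c_1 - 1, c_2)$. Now I need to climb back over $Z$: we know $G/Z$ is nilpotent of class $d$ and $Z$ is abelian and normal in $G$. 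The problem is that $Z$ need not be central in $G$, so I cannot directly invoke the abelian case — but I can, because $Z$ together with $G$ still satisfies the hypotheses of the theorem with $N$ replaced by $Z$: indeed $Z$ is nilpotent (abelian) and $G/Z'  = G/1 = G$ — no, that fails. Instead, the right move is: $Z \le N$, and $N$ centralizes $Z$, so $C_G(Z) \supseteq N$, hence $G/C_G(Z)$ is a quotient of $G/N$, nilpotent of class at most $c_2$, so $Z \le Z_{c_2}(G)$; now apply the abelian-case lemma to the subgroup $Z \le Z_{c_2}(G)$ with $G/Z$ nilpotent of class $d$, concluding $G$ nilpotent of class at most $c_2 + d$.

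Putting the pieces together gives a recursion for the bound on the class of $G$ in terms of $c_1$ and $c_2$, which is all that is claimed. The main obstacle, and the only genuinely delicate point, is the bookkeeping in the inductive step: one must be careful that the hypothesis ``$G/N'$ is nilpotent'' is correctly transported to the quotient $\bar G = G/Z$ (it is, since $\bar N' = N'Z/Z$ and so $\bar G/\bar N'$ is a homomorphic image of $G/N'$), and that the auxiliary fact used to re-climb the central quotient — namely that a normal abelian subgroup centralized by a nilpotent-of-bounded-class quotient lies in a bounded term of the upper central series — is applied to $Z$ rather than $N$. Everything else is the standard fact that $N \le Z_r(G)$ and $G/N$ nilpotent of class $c$ together force $G$ nilpotent of class at most $r+c$, which follows by a routine induction on $r$ using that $[G, Z_1(G)\cdots] $ commutator calculus collapses. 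I would present the abelian case as a short self-contained lemma and then give the two-line induction.
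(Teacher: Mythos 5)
The paper does not prove Theorem \ref{hall} at all: it is quoted as a classical result and attributed to P.~Hall \cite{hall1}. So your proposal can only be judged on its own merits, and unfortunately it contains a genuine gap at the one point where the real content of Hall's theorem lives. The offending step is the implication ``$G/C_G(Z)$ is nilpotent of class at most $c_2$, so $Z\le Z_{c_2}(G)$'' (and the identical step in your ``abelian case'', where you pass from ``$G/C_G(N)$ nilpotent of class $c_2$'' to ``$[N,\underbrace{G,\dots,G}_{c_2}]=1$''). These are different statements: $G/C_G(Z)$ nilpotent of class $c$ says $\gamma_{c+1}(G)\le C_G(Z)$, i.e.\ $[Z,[G,\dots,G]]=1$ with the inner commutator nested, not $[Z,G,G,\dots,G]=1$ with $Z$ in the leftmost slot. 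Take $G=S_3$ and $Z=A_3$: then $C_G(Z)=A_3$, so $G/C_G(Z)\cong C_2$ is abelian, yet $[Z,G]=Z\neq 1$ and $Z\not\le Z_c(G)$ for any $c$. The same example kills the whole inductive step, not just that one line: at the re-climbing stage you only retain the facts ``$G/Z$ is nilpotent of class $d$, $Z$ is abelian normal, and $G/C_G(Z)$ is nilpotent of class $\le c_2$'', and $S_3$ with $Z=A_3$ satisfies all three (with $d=c_2=1$) without being nilpotent. The information you carry forward is therefore strictly insufficient.

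The root of the problem is that replacing the hypothesis ``$G/N'$ is nilpotent'' by ``$G/C_G(Z)$ is nilpotent'' discards exactly the strength that makes the theorem true; in your abelian base case the hypothesis literally reads ``$G=G/N'$ is nilpotent'', which is the conclusion, and your argument substitutes a much weaker statement for it. The reduction to $\bar G=G/\gamma_{c_1}(N)$ is fine and standard, but it leaves you with the case where $Z=N'$ is abelian and central in $N$ (i.e.\ $N$ of class $2$), and that case \emph{is} Hall's theorem --- it cannot be dispatched by centralizer bookkeeping. The known proofs handle it by a genuine commutator computation: one uses the three subgroup lemma iteratively to show that $[\gamma_i(G),\gamma_j(N)]$-type subgroups descend through a filtration of $N'$, starting from $\gamma_{d+1}(G)\le N'\le N$ and exploiting that $N'=[N,N]$ so that each application of the three subgroup lemma trades one copy of $G$ acting on $N'$ for copies acting on $N$ modulo deeper terms. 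If you want a correct self-contained proof, that lemma (or Hall's original ``collection'' argument giving the bound $\binom{c+1}{2}d-\binom{c}{2}$ on the class of $G$) is the missing ingredient; otherwise the honest course is to do what the paper does and cite \cite{hall1}.
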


The following results are Lemma 2.2 and Lemma 2.4 in \cite{S-BFC}.

\begin{lemma}\label{redufini}
Let $G$ be a group and $k,m$ positive integers.
\begin{enumerate}
\item $|\gamma_k(G)|\leq m$ if and only if $|\gamma_k(H)|\leq m$ for any finitely generated subgroup $H\leq G$.
\item If $G$ is residually finite, $|\gamma_k(G)|\leq m$ if and only if $|\gamma_k(Q)|\leq m$ for any finite quotient $Q$ of $G$.
\end{enumerate}
\end{lemma}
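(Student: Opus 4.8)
The statement we are asked to prove is Lemma \ref{redufini}, namely: for a group $G$ and positive integers $k,m$, (1) $|\gamma_k(G)|\leq m$ iff $|\gamma_k(H)|\leq m$ for every finitely generated subgroup $H\leq G$; and (2) if $G$ is residually finite, $|\gamma_k(G)|\leq m$ iff $|\gamma_k(Q)|\leq m$ for every finite quotient $Q$ of $G$.

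\textbf{Plan for part (1).} The forward direction of (1) is immediate: if $|\gamma_k(G)|\le m$, then $\gamma_k(H)\le \gamma_k(G)$ for any subgroup $H$, so $|\gamma_k(H)|\le m$. For the reverse direction, I would use the fact that $\gamma_k(G)$ is generated by the set $G_{\gamma_k}$ of $\gamma_k$-values $[g_1,\dots,g_k]$. First, observe that if all finitely generated subgroups have $|\gamma_k(\cdot)|\le m$, then in particular the set $G_{\gamma_k}$ has at most $m$ elements: indeed, given any $m+1$ distinct $\gamma_k$-values, they all lie in the subgroup $H$ generated by the (finitely many) entries appearing in them, which is finitely generated, contradicting $|\gamma_k(H)|\le m$. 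So $|G_{\gamma_k}|\le m$; write $G_{\gamma_k}=\{v_1,\dots,v_t\}$ with $t\le m$. Now $\gamma_k(G)=\langle v_1,\dots,v_t\rangle$. Pick finitely many elements $a_1,\dots,a_r$ of $G$ so that each $v_i$ is a $\gamma_k$-value in the $a_j$'s, and let $H_0=\langle a_1,\dots,a_r\rangle$. Then $\gamma_k(H_0)$ contains all of $v_1,\dots,v_t$, so $\gamma_k(H_0)=\langle v_1,\dots,v_t\rangle=\gamma_k(G)$; since $|\gamma_k(H_0)|\le m$ we get $|\gamma_k(G)|\le m$.

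\textbf{Plan for part (2).} Again the forward direction is trivial, since $\gamma_k(Q)$ is a quotient of $\gamma_k(G)$ whenever $Q$ is a quotient of $G$. For the reverse direction, suppose $G$ is residually finite and $|\gamma_k(Q)|\le m$ for every finite quotient $Q$. By part (1) it suffices to show $|\gamma_k(H)|\le m$ for every finitely generated subgroup $H$; but a finitely generated subgroup of a residually finite group is itself residually finite, and its finite quotients need not obviously be finite quotients of $G$ — so instead I would argue directly that it is enough to bound $|\gamma_k(H)|$ when $H$ is finitely generated \emph{and} residually finite. For such an $H$: if $|\gamma_k(H)|$ were $>m$ (possibly infinite), pick $m+1$ distinct elements $z_0,\dots,z_m\in\gamma_k(H)$. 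By residual finiteness of $H$ there is a normal subgroup $N\trianglelefteq H$ of finite index with $z_iN$ pairwise distinct in $H/N$; then $H/N$ is a finite quotient of $H$ with $|\gamma_k(H/N)|\ge m+1$. The remaining point is to pass from a finite quotient of $H$ to a finite quotient of $G$. One clean way: instead of working with an arbitrary finitely generated subgroup, note it suffices to treat $G$ finitely generated and residually finite (apply part (1) first to reduce to $G$ finitely generated, then observe a finitely generated residually finite group has the property that every finite quotient is a genuine finite quotient — this is automatic). Thus I would restructure (2) as: reduce via (1) to $G$ finitely generated; a finitely generated residually finite $G$ with all finite quotients satisfying $|\gamma_k(Q)|\le m$ must satisfy $|\gamma_k(G)|\le m$, since otherwise $m+1$ distinct elements of $\gamma_k(G)$ can be separated in a finite quotient as above, giving a contradiction.

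\textbf{Main obstacle.} The only subtle point is the logical bookkeeping in part (2): an arbitrary finitely generated subgroup $H\le G$ is residually finite, but its finite quotients are not visibly finite quotients of $G$, so one cannot directly feed the hypothesis on $G$'s finite quotients into part (1). The clean fix is to apply part (1) \emph{first} to reduce the whole problem to the case where $G$ itself is finitely generated (and residually finite), at which point separating finitely many elements of $\gamma_k(G)$ in a finite quotient of $G$ is straightforward. Everything else is routine: the generation of $\gamma_k$ by its values, the fact that only finitely many generators are needed, and elementary residual-finiteness separation.
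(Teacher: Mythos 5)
The paper does not prove this lemma itself --- it is quoted verbatim from \cite{S-BFC} (Lemmas 2.2 and 2.4 there) --- so there is no in-paper proof to compare against; your argument is the standard one and is essentially correct. Part (1) is fine: the key points (that $\gamma_k(G)$ is generated by the set of $\gamma_k$-values, that the hypothesis forces at most $m$ distinct $\gamma_k$-values since any $m+1$ of them live in the $\gamma_k$ of a single finitely generated subgroup, and that a single finitely generated $H_0$ then already has $\gamma_k(H_0)=\gamma_k(G)$) are all correctly identified and correctly used.

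In part (2) there is one misstep, though it is removable. The ``clean fix'' you propose --- apply part (1) first to reduce to $G$ finitely generated --- is not justified: to invoke part (1) you would need every finitely generated subgroup $H\le G$ to satisfy $|\gamma_k(H)|\le m$, and the hypothesis you have (a bound on $|\gamma_k(Q)|$ for finite quotients $Q$ of $G$) says nothing about the finite quotients of $H$; this is exactly the obstacle you yourself identified, and the parenthetical ``every finite quotient is a genuine finite quotient'' does not resolve it. The good news is that the reduction is unnecessary: your separation argument applies to $G$ directly, with no finite generation needed. If $|\gamma_k(G)|>m$, choose $m+1$ distinct elements $z_0,\dots,z_m\in\gamma_k(G)$; residual finiteness of $G$ gives, for each pair $i\ne j$, a finite-index normal subgroup missing $z_iz_j^{-1}$, and intersecting these finitely many subgroups yields a finite quotient $Q=G/N$ in which the images of the $z_i$ are pairwise distinct and lie in $\gamma_k(G)N/N=\gamma_k(Q)$, so $|\gamma_k(Q)|\ge m+1$, a contradiction. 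Delete the detour through part (1) and state this directly, and the proof is complete.
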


\begin{lemma}\label{lll}
Let $G$ be a metabelian group and suppose that $a,b\in G$ are $l$-Engel elements. If $x\in\langle a,b\rangle$, then $x$ is $(2l+1)$-Engel.
\end{lemma}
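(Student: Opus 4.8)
The plan is to exploit the module structure carried by a metabelian group. Put $A=G'$; since $G$ is metabelian, $A$ is an abelian normal subgroup and $Q:=G/A$ is abelian, so conjugation makes $A$ into a module over the commutative ring $\mathbb{Z}[Q]$: for $v\in A$ the conjugate $v^{g}$ depends only on $\bar{g}:=gA\in Q$ (because $A$ is abelian), and I extend this exponential action to all of $\mathbb{Z}[Q]$. With this notation one has, for $v\in A$ and $h\in G$, the identity $[v,h]=v^{\bar{h}-1}$, and hence $[v,{}_{m}h]=v^{(\bar{h}-1)^{m}}$; more generally $[g,h]\in A$ for every $g\in G$, so $[g,{}_{m}h]=[g,h]^{(\bar{h}-1)^{m-1}}$ for all $m\ge 1$. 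Throughout, ``$u$ is $r$-Engel'' is read as $[g,{}_{r}u]=1$ for every $g\in G$.

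First I would rewrite the hypotheses as annihilation statements. Applying $[g,{}_{l}a]=1$ to elements $g\in A$ gives $g^{(\bar{a}-1)^{l}}=1$ for all $g\in A$, i.e.\ $(\bar{a}-1)^{l}$ annihilates the $\mathbb{Z}[Q]$-module $A$; similarly $(\bar{b}-1)^{l}$ annihilates $A$. Now take $x\in\langle a,b\rangle$. Its image $\bar{x}$ lies in the abelian subgroup $\langle\bar{a},\bar{b}\rangle\le Q$, so $\bar{x}=\bar{a}^{i}\bar{b}^{j}$ for some integers $i,j$. Since $\bar{a}^{i}-1\in(\bar{a}-1)\mathbb{Z}[Q]$ and $\bar{b}^{j}-1\in(\bar{b}-1)\mathbb{Z}[Q]$, the identity $\bar{x}-1=\bar{a}^{i}(\bar{b}^{j}-1)+(\bar{a}^{i}-1)$ shows that $\bar{x}-1=(\bar{a}-1)p+(\bar{b}-1)q$ for suitable $p,q\in\mathbb{Z}[Q]$.

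The last step is a pigeonhole count. Expanding $(\bar{x}-1)^{2l}$ by the binomial theorem --- legitimate because $\mathbb{Z}[Q]$ is commutative --- gives a sum of monomials, each divisible by $(\bar{a}-1)^{t}(\bar{b}-1)^{2l-t}$ for some $0\le t\le 2l$; since one always has $t\ge l$ or $2l-t\ge l$, each monomial is divisible by $(\bar{a}-1)^{l}$ or by $(\bar{b}-1)^{l}$, hence annihilates $A$. Consequently $(\bar{x}-1)^{2l}$ annihilates $A$, and as $[g,x]\in A$ for every $g\in G$ we obtain $[g,{}_{2l+1}x]=[g,x]^{(\bar{x}-1)^{2l}}=1$ for all $g\in G$; that is, $x$ is $(2l+1)$-Engel.

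I do not expect a serious obstacle here: the argument is short once the right language is set up. The two points that need care are the passage from the (left) Engel condition on $a$ to the annihilation of the whole module $A$ (which amounts to $[A,a]\le[G,a]$, equivalently to testing the condition on $g\in A$), and the bookkeeping that rephrases iterated group commutators as plain multiplication in $\mathbb{Z}[Q]$. The mechanism forcing the bound $2l+1$ is precisely the elementary fact that among $2l$ factors of type $(\bar{a}-1)$ or $(\bar{b}-1)$, at least $l$ must be of the same type.
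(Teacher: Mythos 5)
Your proof is correct: the reduction of the Engel hypotheses to the statement that $(\bar a-1)^l$ and $(\bar b-1)^l$ annihilate the $\mathbb{Z}[G/G']$-module $G'$, the factorization $\bar x-1=(\bar a-1)p+(\bar b-1)q$, and the binomial pigeonhole applied to $(\bar x-1)^{2l}$ together form a complete argument, with the passage from the left Engel condition to module annihilation and back handled correctly. The paper does not prove this lemma itself but quotes it as Lemma 2.4 of \cite{S-BFC}; your module-theoretic computation is essentially the standard proof of that result, and in particular it accounts exactly for the bound $2l+1$.
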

 We say for short that a group $A$, acting on a group $G$ by automorphisms, acts   coprimely on $G$ if the orders of $A$ and $G$ are
finite and coprime, that is, $(|A|, |G|) = 1$. The following lemma records a standard result on coprime action (see \cite[(24.5)]{asch}).

\begin{lemma}\label{coprime}
Let $A$ be a finite group acting coprimely on a finite group $G$ such that $(|A|,|G|)=1$. Then $[G,A,A]=[G,A]$.
\end{lemma}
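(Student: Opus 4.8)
The plan is to prove the nontrivial inclusion $[G,A]\subseteq[G,A,A]$, the reverse inclusion $[G,A,A]\subseteq[G,A]$ being immediate. First I would reduce to the special case in which $[G,A,A]=1$. Indeed, $N:=[G,A,A]$ is a normal $A$-invariant subgroup of $G$ (a standard fact about the terms $[G,A;i]$ inside the semidirect product $G\rtimes A$), so $A$ acts on $\bar G:=G/N$, still coprimely since $|\bar G|$ divides $|G|$, and $[\bar G,A,A]=\overline{[G,A,A]}=1$. If the special case yields $[\bar G,A]=1$, this says exactly $[G,A]\le N=[G,A,A]$, as required.

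So assume $[G,A,A]=1$; equivalently, $A$ fixes every element of $[G,A]$, i.e.\ $[G,A]\le C_G(A)$. I would then fix $g\in G$ and consider the map $\f\colon A\to G$ given by $\f(a)=[g,a]=g^{-1}g^a$. Using the identity $[g,ab]=[g,b][g,a]^b$ together with $[g,a]\in[G,A]\le C_G(A)$ (so $[g,a]^b=[g,a]$), one gets $\f(ab)=[g,b][g,a]=\f(b)\f(a)$, so $\f$ is an anti-homomorphism. Consequently its image $\f(A)$ is a subgroup of $G$ with $|\f(A)|=[A:C_A(g)]$ dividing $|A|$. On the other hand $\f(A)\subseteq[G,A]\subseteq G$, so $|\f(A)|$ also divides $|G|$; since $(|A|,|G|)=1$ this forces $\f(A)=1$, i.e.\ $[g,a]=1$ for all $a\in A$. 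As $g\in G$ was arbitrary, $[G,A]=1$, which settles the special case and hence the lemma.

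The argument is essentially routine, and I do not expect a genuine obstacle; the only points needing care are the bookkeeping with the coprimality hypothesis and the verification that $[G,A,A]$ is normal and $A$-invariant, so that the reduction to the case $[G,A,A]=1$ is legitimate, and both are standard. If one prefers to avoid even that normality fact, an alternative is to invoke the coprime decomposition $G=[G,A]\,C_G(A)$ and compute $[G,A]=[[G,A]C_G(A),A]=[[G,A],A]^{C_G(A)}=[G,A,A]$, using that $[G,A,A]$ is normalized by $G$.
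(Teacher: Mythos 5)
The paper does not actually prove this lemma --- it simply cites \cite[(24.5)]{asch} --- so the only question is whether your argument stands on its own. The core of it (the anti-homomorphism $\f(a)=[g,a]$ and the order-divisibility argument) is correct and genuinely elementary, but the reduction to the case $[G,A,A]=1$ has a real gap: you assert that $N=[G,A,A]$ is a normal subgroup of $G$ as ``a standard fact about the terms $[G,A;i]$''. This is false in general. For example, let $G=S_3\times S_3$ and let $A=\langle\sigma\rangle$ be generated by the coordinate swap. Then $[G,A]=\{(g,h):\mathrm{sgn}(g)=\mathrm{sgn}(h)\}$, while $[G,A,A]=\{(x,x^{-1}):x\in A_3\}$ is the antidiagonal copy of $C_3$, which is not normal in $G$: conjugation by $((12),1)$ sends $((123),(132))$ to $((132),(132))$. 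Under the coprimality hypothesis $[G,A,A]$ is indeed normal in $G$ --- but only because it equals $[G,A]$, i.e.\ because of the very lemma you are proving; so invoking normality at this stage is circular. The alternative you sketch at the end suffers from the same issue and in addition rests on $G=[G,A]\,C_G(A)$, which is a strictly harder theorem (its proof needs the conjugacy part of Schur--Zassenhaus, hence solubility of $A$ or $G$).

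Fortunately the gap is local and easily repaired: you never need to factor $G$ itself. Set $H=[G,A]$ and $K=[H,A]=[G,A,A]$; then $K$ is normalized by $H$ and by $A$ (this much \emph{is} standard, since $[X,Y]$ is always normalized by $X$ and $Y$), so $H/K$ is a group. Since $[g,a]\in H$ for all $g\in G$ and $a\in A$, you may define $\f\colon A\to H/K$ by $\f(a)=[g,a]K$; the identity $[g,ab]=[g,b][g,a]^b$ together with $[[g,a],b]\in K$ gives $\f(ab)=\f(b)\f(a)$ exactly as in your argument, so $\f(A)$ is a subgroup of $H/K$ whose order divides $|A|$ and also divides $|H/K|$, hence divides $|G|$, hence equals $1$. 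Thus $[g,a]\in K$ for every $g$ and $a$, i.e.\ $[G,A]\le[G,A,A]$. With this modification your proof is correct and more elementary than the textbook argument the paper cites.
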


Throughout the remaining part of the paper we assume the group $G$ satisfies the $(n,s)$-covering condition with respect to the word $\gamma_k$:
\begin{equation*}G_{\gamma_k}\subseteq BS,\end{equation*}
where $B =B_n(G)= \{x \in G  \,:\,  |x^G | \le n\}$ and $S$ is a finite subset of $G$ of size $s$. 

First, we prove that, under the  above hypothesis, 
 $\gamma_{2k-1}(G)$ 
  has a  subgroup $\T$ such that $[\gamma_{2k-1}(G) : \T]$ and $|\T'|$ are both $(k , n, s)$-bounded.
	  
\begin{proof}[Proof of Theorem \ref{cov-nilp} (1).] 
We may assume $1 \in S$. If $s=1$, then  the result holds
 by Theorem 1.2 in \cite{dms}, since  $\gamma_k(G)'$ turns out to have finite $(n,k)$-bounded order.  
 
 Assume $s \ge 2$ and argue by induction on $s$.  Suppose there is a commutator $x\in G_{\gamma_k}$ such
that $n < |x^G | \le n^{2^k}$. By the covering condition $x\in Ba$ for some nontrivial $a \in S$.
This implies that $|a^G |\le n^{2^k+1}$. Then for any $g \in Ba$ we have $|g^G |\le n^{2^k+2}$. Hence we
can remove $a$ from $S$ at the cost of increasing $n$ to $n^{2^k+2}$ and then apply induction
on $s$. 
Hence, we may assume $B$ satisfies the stability condition \eqref{stab1}: 
\begin{equation*}\label{stab-gamma}\text{Any $\gamma_k$-value $x$ such that  $|x^G | \le n^{2^k}$ is contained in $B$.}
\end{equation*}
Let $H=\langle B \rangle$. It follows from Lemma \ref{step1} that  $[H, G, \dots, G]'$ has finite $(k,n)$-bounded order. 

Now it is sufficient to prove that $[H, G, \dots, G]$ has bounded index in $\gamma_{2k-1}(G)$. 
 
 Note that in $G/H$ there are at most $s$ values of the word $\gamma_k$. As $\gamma_k$ is boundedly concise  (see \cite{fernandez-morigi}), we deduce that $\gamma_k (G/H)$ has finite bounded order. Therefore the index of $H\cap \gamma_k (G)$ in $\gamma_k (G)$ is bounded. 
  
   Let $\overline G$ be the quotient group of $G$ over $[H, G, \dots, G]$. The image $\overline H$ of $H$ in $\overline G$ is contained in $Z_{k-1}(\overline G).$  
   Here and throughout $Z_i(M)$ denotes the $i$th term of the upper central series of a group $M$. 
   Since the index of $\overline H\cap \gamma_k (\overline G)$ in $\gamma_k (\overline G)$ is bounded, we deduce that 
 $Z_{k-1}(\overline G) \cap \gamma_k (\overline G)$ has bounded index in $\gamma_k (\overline G)$. 
 
It follows from Theorem B in \cite{guma} that $Z_{2{k-1}}(\overline G) $ has bounded index in $\overline G$. Hence, we deduce from Baer's theorem (see \cite[4.5.1]{Rob}) that $\gamma_{2(k-1)+1} (\overline G)$ has finite bounded order. Therefore $[H, G, \dots, G]$ has bounded index in $\gamma_{2k-1} ( G)$, as claimed, and the proof is complete. 
\end{proof}

Now we proceed with the proof of the second part of Theorem \ref{cov-nilp}. We need to show that $G$ is nilpotent-of-bounded-class-by-bounded,
with bounds depending only on $s$, $n$ and $k$.

\begin{proof}[Proof of Theorem \ref{cov-nilp} (2)]
By Theorem \ref{soluble-main}, we can assume that $G$ is soluble of bounded derived length. 

 By induction on the derived length, $G'$ has a nilpotent subgroup $A$ of bounded index
and class. In view of \cite{khuma} we can assume that $A$ is characteristic in $G'$. It follows that $A$ is normal
in $G$. If $A'\ne 1$ then
by induction on the class of $A$ there is a nilpotent subgroup of $G/A'$ of bounded
index and class. By Hall's criterion we are done in this case, so we may assume that
$A$ is abelian. Since $G'/A$ is bounded, we may replace $G$ with a bounded-index
subgroup whose image in $G/A$ has class at most $2$.
Thus, without loss of generality we assume that $G$ is abelian-by-class-2. By another classical result of Hall, any
abelian-by-nilpotent group is locally residually finite (see \cite[15.4.1]{Rob}).Thus, by 
Lemma \ref{redufini}, it is sufficient to prove the result for finite quotients of finitely generated subgroups of $G$. Therefore without loss of generality we may assume that $G$ is finite and $\gamma_3(G)$ is abelian. 

 Assume first that $G$ is metabelian. Let $N=G'$. Set 
\[{\bf G}=G\times\dots\times G\ \ (k-1\text{ factors}).\]
 If ${\bf g}=(g_1,\dots,g_{k-1})\in{\bf G}$, let $N_{\bf g}=[N,g_1,\dots,g_{k-1}]$. 
  As $N$ is abelian,
   the map $a\mapsto [a,g_1,\dots,g_{k-1}]$ is a
homomorphism $N\to N_g,$ thus the subgroup $N_{\bf g}$ consists of $\gamma_k$-values, 
 hence $N_{\bf g}\subseteq BS$. 

 Suppose that $[a_1,g_1,\dots,g_{k-1}],[a_2,g_1,\dots,g_{k-1}]\in Bx$ for some $a_1,a_2\in N$ and $x\in S$. Then
$$[a_1a_2^{-1},g_1,\dots,g_{k-1}]=[a_1,g_1,\dots,g_{k-1}][a_2,g_1,\dots,g_{k-1}]^{-1}\in B^2.$$

As $|S| \le s$, 
it  follows that
$$\pr _{ a\in N} ([a,g_1,\dots,g_{k-1}]\in B^2 )\ge 1/s,$$
so
$$\pr _{ a\in N\, h\in G} ([a,g_1,\dots,g_{k-1}, h] = 1) \ge\frac 1{sn^2},$$
that is, $\pr (N_{\bf g},G)\geq\frac{1}{s n^2}$. As $N=G'$ is abelian and $g_i^g\in Ng_i$ for each $g\in G$ and for each $i$, it follows that $N_{\bf g}$ is a normal subgroup of $G$. 
By Proposition 1.2 and Remarks 2.6 - 2.7 in \cite{DS}
  there are two  normal subgroups $T_{\bf g}$ and  $B_{\bf g}\leq N_{\bf g}$ such that the indices $[G:T_{\bf g}]$ and $[N_{\bf g}:B_{\bf g}]$
   and the order of $[T_{\bf g},B_{\bf g}]$ are $(n,s)$-bounded. Moreover, the subgroup $T_{\bf g}$ can be chosen in such a way  that $N_{\bf g} \cap T_{\bf g} \le Z_3(T_{\bf g})$. 

We now will handle the particular case where $G$ is a metabelian $p$-group. 

As $[T_{\bf g},B_{\bf g}]$
 is a normal subgroup of  $n$-bounded order, it follows that there is an $n$-bounded number $e$ such that $Z_e(G)$ contains $[T_{\bf g},B_{\bf g}]$ for every ${\bf g}\in {\bf G}$. 
Pass to the quotient $G/Z_e(G)$ and assume that $[T_{\bf g},B_{\bf g}]=1$ for any ${\bf g}\in {\bf G}$. 

We now prove the following claim: 
\begin{itemize}
\item[(*)] {\it Suppose that $G$ is $l$-Engel for some $l\geq1$. Then $G$ is nilpotent of $(k,l,n)$-bounded class.}
\end{itemize}
 Since $G/T_{\bf g}$ has bounded order, $G =\langle x_1,\dots,x_r,T_{\bf g}\rangle$ for some elements $x_1,\dots,x_r\in G$
and bounded $r$. Moreover $G$ is metabelian and $l$-Engel, so $N\langle x_i\rangle$ has class at most $l$
for each $i$, and therefore $N\langle x_1,\dots,x_r\rangle$ has nilpotency class at most $lr$ by Fitting's theorem. In particular
$B_{\bf g}\langle x_1,\dots,x_r\rangle$ has bounded class. It follows from $[T_{\bf g},B_{\bf g}]=1$ that $B_{\bf g}$ is contained in $Z_j(G)$ for some bounded number $j$. Moreover $N_{\bf g}/B_{\bf g}$ has bounded order, so $N_{\bf g}$ is
contained in $Z_{j'}(G)$ for some bounded number $j'$. Thus $[N,g_1,\dots,g_{k-1}]$
is contained in $Z_{j'}(G)$ for all $g_1,\dots,g_{k-1}\in G$.  As $N=G'$, we conclude that 
$\gamma_{k+1}(G)\le Z_{j'}(G)$ and so $G$  is nilpotent of class at most $k+j'$. 
Hence, the claim follows. 

Thus, it suffices to find a bounded-index subgroup of $G$ which is $l$-Engel for some bounded $l$.
We will use the notation $[x,{}_iy]$ to denote the long commutator $[x,y,\dots,y]$, where $y$ is repeated $i$ times.

If $g\in G$, for short we will write 
\[{\bf \tilde g}=(g,\dots,g)\in{\bf G},\]
so $N_{\bf \tilde g}=[N,{}_{k-1}g]$.
Since $N_{\bf \tilde g}/B_{\bf\tilde g}$ is a $p$-group of bounded order, there is a bounded number $f$ such that
$$[N_{\bf\tilde g},\;_f g]=[N,{}_{k-1+f}g] \le B_{\bf\tilde g}$$ for all $g\in G$. For $g\in G$ and $i\ge k-1$ write $B_{i,g}$ for $[B_{\bf\tilde g},{}_{i-k+1}\,g]$. As $B_{\bf\tilde g}\le N$ is abelian and $g^y\in Ng$ for each $y\in G$, it follows that $B_{i,g}$ is normal in $G$.
  Put $C_{i,g}=C_G(B_{ig})$. 
Let $\beta_i$ be the maximal index of $C_{i,g}$, where $g$ ranges over $G$. Since $[T_{\bf \tilde g},B_{\bf\tilde g}]=1$, it follows that $\beta_{k-1}$ is ($|S|$,$n$)-bounded. It is clear that $\beta_{k-1}\geq\beta_k\geq\beta_{k+1}\geq\dots$, thus there is a bounded number $u$ such that $k-1\leq u$ and $\beta_u=\beta_{2u+f+k-1}$.

Choose $g\in G$ such that $C_{u,g}=C_{2u+f+k-1,g}$ is of index $\beta_u$ in $G$. We have $C_{u,g}=C_{j,g}$ for $j=u,u+1,\dots,2u+f+k-1$. Let $h\in C_{u,g}$. Note that 
\begin{eqnarray*}
B_{2u+f+k-1,g} &=&[B_{u,g},{}_{u+f+k-1}g]=[B_{u,g},{}_{u+f+k-1}\,(gh)]\\
&\le& [N,{}_{u+f+k-1}\,(gh)] \le[B_{\bf\tilde{gh}},{}_{u}\,(gh)] \\
&\le&
B_{u,gh}.
\end{eqnarray*}
So $C_{u,gh}\le C_{2uf+k-1,g}=C_{u,g}$. 
Since $\beta_u$ is maximal and since $h$ was chosen in $C_{u,g}$ arbitrarily, we are forced to conclude that $$C_{u,g}=C_{u,gh} \text{ whenever } h\in C_{u,g}.$$ Set $C_{u,g}=C_0$ and note that $C_0$ centralizes $B_{u,gh}$ for any $h\in C_0$. 

Obviously, $C_0$ is a normal subgroup of index $\beta_u$ in $G$. Let $\overline{G}=G/Z(C_0)$. Since $Z(C_0)$ contains $B_{u, gh}=[B_{\tilde{\bf gh}},{}_{u-k+1}\,(gh)]$ and also  $[N,{}_{k-1+f}\,(gh)] \le B_{\bf\tilde{gh}}$ we conclude that $\overline{gh}$ is $(u+f+1)$-Engel in $\overline{G}$ for any $h\in C_0$. We deduce from Lemma \ref{lll} that $C_0$ is $l$-Engel for $l=2u+2f+3$. 
Now 
  it follows from the above claim $(*)$ 
 that $C_0$ is nilpotent of bounded class and so in the case where $G$ is a metabelian $p$-group the result follows. 

Now we deal with the particular case where $G/N$ is an abelian $p$-group while $N$ is a $p'$-group. 

 Since $N_{\bf g} \cap T_{\bf g} \le Z_3(T_{\bf g})$, we have that $[[N_{\bf g} ,T_{\bf g}] ,T_{\bf g} ,T_{\bf g}, T_{\bf g}] =1$. It follows from Lemma \ref{coprime} that $T_{\bf g}$ centralizes $N_{\bf g}$.

So we simply assume that $B_{\bf g}=N_{\bf g}$ and $T_{\bf g}=C_G(N_{\bf g})$. Note that if $g\in G$ and ${\bf g}=(g,\dots,g)\in {\bf G}$, then $N_{\bf g}=[N,g]$ so in what follows we write $N_g$ in place of $N_{\bf g}$ and $T_g$ in place of $T_{\bf g}$.

Choose $g\in G$ such that the index $[G:T_g]$ is maximal. Let $h\in T_g$. Because of Lemma \ref{coprime} we have $$N_g=[N_g,g]=[N_g,gh]\leq N_{gh}.$$ It follows that $T_{gh}\leq T_g$ for any $h\in T_g$. Since $[G:T_g]$ is maximal, deduce that $T_{gh}=T_g$ for any $h\in T_{ g}$. So $T_g$ centralizes both $N_g$ and $N_{gh}$. It is easy to see that $N_h\leq N_gN_{gh}$, so $T_g$ centralizes $N_h$. This holds for every $h\in T_g$ and so $[N,T_g,T_g]=1$. In view of Lemma \ref{coprime} deduce that $[N,T_g]=1$. Therefore $T_g$ is nilpotent of class at most $2$
and this proves the theorem in the coprime metabelian case. 

We now handle the case where $G$ is metabelian without making any assumptions on the order of $N$ and $G/N$. Let $e=n!$. As $G$ acts on $B$ by conjugation and each orbit has size at most $n$, it follows that  $G^e$ centralizes $B$. So $\langle B\rangle\cap G^e\le Z(G^e)$. The covering condition implies that the number of $\gamma_k$-values in the group $G^e/Z(G^e)$ is at
most $s$. Taking into account that $\gamma_k$ is a boundedly concise word
it follows that $\gamma_k\left(G^e/Z(G^e)\right)$ has bounded order, and its centralizer in $G^e$ is a bounded-index subgroup of nilpotency class at most $k+1$.  
 For each prime $p\leq n$ let $G_p$ be the preimage in $G$ of the Sylow $p$-subgroup of $G/N$. For $p>n$, the Sylow $p$-subgroups of $G$ are contained in $G^e$. Hence $G=G^e\prod_{p\leq n}G_p$. Therefore by Fitting's theorem it suffices to prove the result for $G=G_p$. Thus we may assume that $G/N$ is a $p$-group. Let $P$ be a Sylow $p$-subgroup of $G$. Then $G=NP$. By the result on $p$-groups, we can replace $P$ with a subgroup of bounded index and bounded class, so without loss of generality we assume that $P$ has $(k,n)$-bounded class $c$, say. It follows that $[P\cap N,{}_cG]=1$ and  so $P\cap N\leq Z_c(G)$. Factoring out $Z_c(G)$, we may thus assume that $N$ is a $p'$-group. By the result in the coprime case we are done.

We now drop the assumption that $G$ is metabelian and consider the general case. As $\gamma_3(G)$ is abelian, $G'$ is metabelian. Therefore, by the above,
 $G'$ has a nilpotent subgroup $A$ of bounded index and  bounded class. In view of \cite{khuma} we can assume that $A$ is characteristic in $G'$ and in particular normal in $G$. Applying Theorem \ref{hall} we reduce to the case where $A$ is abelian. 

If $A=G'$, then $G$ is metabelian and the result holds. We therefore assume that $A<G'$. Further, replacing $G$ by $C_G(G'/A)$ we assume that $G/A$ is nilpotent of class at most $2$. 
In particular, $\gamma_2(G') \le A$. 
 Fix a set $X$ of size at most $|G'/A|$ such that $G'$ is generated by $A$ and $X$. Set $${\bf X}=X\times\dots\times X\ \ (k-1\text{ factors}).$$ If ${\bf x}=(x_1,\dots,x_{k-1})\in{\bf X}$, the subgroup
 $A_{\bf x}=[A,x_1,\dots,x_{k-1}]$ 
 consists of $\gamma_k$-values. 
 Hence, arguing as above, we deduce that $\pr (A_{\bf x},G)\geq\frac{1}{s n^2}$.  
As $G$ centralizes $G'/A$, we have that $x_i^g\in Ax_i$ for each $i$, and because of the fact that $A$ is a normal abelian subgroup of $G$ it follows that $A_{\bf x}$ is normal in $G$.
 By \cite[Remark 2.7]{DS}  there is a normal subgroup $T_{\bf x}$ and a normal subgroup $B_{\bf x}\leq A_{\bf x}$ such that the indices $[G:T_{\bf x}]$ and $[A_{\bf x}:B_{\bf x}]$ are $(s, n)$-bounded and 
  $T_{\bf x} \cap A_{\bf x} \le Z_3(T_{\bf x})$. 
  Set 
  \[H= \bigcap_{{\bf x} \in{\bf X}} T_{\bf x}. \] 
  Since $|{\bf X}|\leq|G'/A|^{k-1}$, observe that the index of $H$ in $G$ is bounded. 
 Therefore it is sufficient to show that $H$ has a nilpotent subgroup of bounded index and bounded class. Note that $A_{\bf x} \cap H \leq Z_3(H)$ for any ${\bf x}\in{\bf X}$. 
  Since $\gamma_2(G') \le A$, we have $\gamma_2(H') \le A \cap H' \le G'=A\langle X\rangle.$ Thus every $\gamma_{2+k-1}$-value of $H'$ 
   can be written as a product of elements each of whom belongs to 
   $A_{\bf x}\leq Z_3(H)$, for some ${\bf x} \in{\bf X}$. 
 Therefore $\gamma_{2+k-1+3}(H')=1$ and 
  $H'$ is nilpotent of class at most $k+3$. An application of Hall's Theorem \ref{hall} reduces the problem to the case where $H'$ is abelian. We already know that for metabelian groups the theorem holds. This completes the proof of Theorem  \ref{cov-nilp}. 
\end{proof}

\end{document}